\theoremstyle{plain}             
\newtheorem{theorem}{Theorem}[section]
\newtheorem{lemma}[theorem]{Lemma}
\newtheorem{corollary}[theorem]{Corollary}
\newtheorem{remark}[theorem]{Remark}
\newcommand{\isdef}{\mathrel{\mathrel{\mathop:}=}}
\newcommand{\E}{\mathbb{E}}
\newcommand{\V}{\mathbb{V}}
\newcommand{\Cov}{\operatorname{Cov}}
\renewcommand{\div}{\operatorname{div}}
\newcommand{\balpha}{{\boldsymbol{\alpha}}}
\newcommand{\bbeta}{{\boldsymbol{\beta}}}
\newcommand{\bgamma}{{\boldsymbol{\gamma}}}
\newcommand{\bphi}{{\boldsymbol{\varphi}}}
\newcommand{\refd}{{\operatorname*{ref}}}
\newcommand{\essinf}{\operatorname*{ess\,inf}}
\newcommand{\esssup}{\operatorname*{ess\,sup}}
\renewcommand{\d}{\operatorname{d}\!}
\newcommand{\vertiii}[1]{{\vert\kern-0.25ex\vert\kern-0.25ex\vert #1 
    \vert\kern-0.25ex\vert\kern-0.25ex\vert}}
    \newcommand{\bvertiii}[1]{{\big\vert\kern-0.25ex\big\vert\kern-0.25ex\big\vert #1 
    \big\vert\kern-0.25ex\big\vert\kern-0.25ex\big\vert}}
    \newcommand{\Bvertiii}[1]{{\Big\vert\kern-0.25ex\Big\vert\kern-0.25ex\Big\vert #1 
    \Big\vert\kern-0.25ex\Big\vert\kern-0.25ex\Big\vert}}
        \newcommand{\bbvertiii}[1]{{\bigg\vert\kern-0.25ex\bigg\vert\kern-0.25ex\bigg\vert #1 
    \bigg\vert\kern-0.25ex\bigg\vert\kern-0.25ex\bigg\vert}}
\title[A note on the domain mapping method with rough diffusion coefficients]
{A note on the domain mapping method with rough diffusion coefficients} 
\author{M.\ D.\ Multerer}
\address{
Michael D.\ Multerer. Institute of Computational Science, Universit{\`a} della Svizzera italiana, Switzerland.
}
\email{michael.multerer@usi.ch}
\begin{document}

\maketitle
\begin{abstract}
In this article, we consider elliptic diffusion problems on random domains with
non-smooth diffusion coefficients. We start by illustrating the problems that
arise from a non-smooth diffusion coefficient by recapitulating the corresponding
regularity analysis. Then, we propose an alternative approach to address this
problem by means of a perturbation method. Based on the assumption that the 
diffusion coefficient can be decomposed in a possibly deterministic, analytic part and 
a rough random perturbation, we derive approximation results in terms of the 
perturbations amplitude for the approximation of quantities of interest of the 
solution. Numerical examples are given in order to validate and quantify the
theoretical results.
\end{abstract}

\section{Introduction}
Often, problems arising in science and engineering can be modeled in terms of 
boundary value problems. In general, the numerical solution of the latter is well 
understood if all input parameters are known exactly. 
In practice, however, this might be a too strong assumption if 
input parameters are only known up to certain measurement tolerances. 
In this view, particularly the treatment of uncertainties in the computational 
domain has become of growing interest, see e.g.~\cite{CK,HSS,%
xiu2,xiu,CNT16,HPS16,HSSS18}. In this article, we consider the elliptic diffusion equation
\begin{equation}\label{SPDE}
  -\div\big(a(\omega)\nabla u(\omega)\big) = f\ \text{in $D(\omega)$}, \quad
  	u(\omega) = 0\ \text{on $\partial D(\omega)$},
\end{equation}
as a model problem where the underlying domain 
$D\subset\mathbb{R}^d$ and the diffusion coefficient \(a(\omega)\)
are assumed to be random. 
This model may be used to account for
tolerances in the shape of products fabricated by line 
production or shapes which stem from inverse problems, 
like e.g.~tomography. 
The latter has recently been discussed, 
in the case of a deterministic diffusion coefficient, in \cite{GP16}.

Besides the fictitious domain approach 
considered in \cite{CK}, one might essentially distinguish two 
approaches to deal with uncertain domains: the \emph{perturbation method}, 
see e.g.\ \cite{HSS} and the references therein, 
which is suitable to handle small perturbations
of the nominal shape and the \emph{domain mapping method}, see e.g.\ \cite{xiu}.
The perturbation approach has also successfully been applied to deal with random data, like
random diffusion coefficients and random loadings,
see e.g.\ \cite{CS13,HPS13a,BC02,K64,KH92,BNK16}.

In this work, we shall combine the domain mapping method for the numerical treatment of
the random domain with the perturbation approach for dealing with the random coefficient.
A similar hybrid approach has recently 
been considered in \cite{CNT17} to account for uncertainties induced by the computational domain.
There, the domain mapping method is employed to capture the large deformations of the domain,
while the perturbation approach is used to account for the smaller deformations.

Usually, if the solution \(u({\bf x},\omega)\) 
to \eqref{SPDE} provides sufficient regularity, quantities of interest, like
expectation and variance, are computed by sophisticated sparse quadrature 
and quasi-Monte Carlo methods, see e.g.\ \cite{HHPS18,NTW08a,NTW08b,DKL+14,Caf98,SZ17}.
These methods alleviate the computational burden that comes along with the high dimensionality
inherent to this class of problems.
The analysis recently published in \cite{CNT16,HPS16,HSSS18} shows that such regularity results
are also available for the solution to \eqref{SPDE}, if the underlying data, i.e.\
the diffusion coefficient, the loading and possible data boundary data are analytic functions.
If this is not the case, one has to resort to the only slowly converging Monte Carlo method.
In practice, depending on the resolution of the discretization and the desired accuracy for the
quantities of interest, this is a strong limitation. Indeed, due to the lack of smoothness in 
the data the reference solution for the numerical example for this article had to be computed 
on a supercomputer expending an immense amount of resources. 
Therefore, our goal in this work is to weaken this requirement.  

Exemplarily, we focus
here on the diffusion coefficient and emphasize that other data can be handled in a similar way.
Assuming an \emph{essentially smooth} diffusion coefficient, i.e. a diffusion coefficient of the form
\[a=a_s+\varepsilon a_r,\quad 0 < \varepsilon\ll 1,\] where 
\(a_s\) is an analytic function and \(a_r\) is essentially
bounded, i.e.\ \(\|a_r\|_{L^\infty}\leq 1,\) 
we will derive approximation results for quantities of interest
of the solution \(u\) in terms of the perturbation's amplitude.

The rest of this article is organized as follows. In Section
\ref{sec:Problemform}, we introduce some basic definitions and
introduce the domain mapping method. Section~\ref{sec:analyticDiffusion}
adapts end extends the regularity results from \cite{HPS16} for the situation of an
analytic and random diffusion coefficient. The subsequent Section~\ref{sec:roughDiffusion}
is the main contribution of this article.
Here, we consider the case of rough diffusion coefficients in the domain mapping
framework and derive approximation results for the perturbation approach under consideration.
Afterwards, Section~\ref{sec:numericalRealization} 
gives a brief overview of the numerical realization of the presented
method. In particular, we explain how diffusion problems on random domains can efficiently
be realized by means of finite element methods and how a diffusion coefficient given in spatial
coordinates can be handled.
Finally, Section~\ref{sec:numres} provides a numerical example to validate the 
theoretical results.

\section{Problem formulation}\label{sec:Problemform}
In what follows, let $D_\refd\subset\mathbb{R}^d$ for \(d\in\mathbb{N}\) (of
special interest are the cases \(d=2,3\)) be a
domain with Lipschitz continuous boundary \(\partial D_\refd\) and let 
$(\Omega,\mathcal{F},\mathbb{P})$ 
be a probability space with $\sigma$-field $\mathcal{F}\subset 2^\Omega$ 
and a complete probability measure $\mathbb{P}$, i.e.~for all $A\subset B$ 
and $B\in\mathcal{F}$ with $\mathbb{P}[B]=0$ it follows $A\in\mathcal{F}$. We
are interested in computing quantities of interest of the solution to the elliptic diffusion problem
\begin{equation}\label{eq:modProb}
\begin{aligned}
-\div\big(a({\bf x,\omega})\nabla u({\bf x},\omega)\big) &= f({\bf x})&&\text{in }D(\omega),\\
u({\bf x},\omega) &= 0&&\text{on }\Gamma(\omega)\isdef\partial D(\omega)
\end{aligned}
\end{equation}
for \(\mathbb{P}\)-almost every \(\omega\in\Omega\). Note that the case of non-homogeneous Dirichlet data
in the domain mapping method can always be reduced to the above homogeneous case, see e.g.\ \cite{GP16}.
Neumann problems can be treated as well, if the matching condition is satisfied for for \(\mathbb{P}\)-almost every \(\omega\in\Omega\).

In order to guarantee the well posedness of \eqref{eq:modProb}, 
we assume that all data, i.e.\ the diffusion coefficient \(a\) and 
the loading \(f\)
are defined with respect to the hold-all domain
\[
\mathcal{D}\isdef\bigcup_{\omega\in\Omega}D(\omega).
\]
The diffusion coefficient  \(a({\bf x},\omega)\) shall be uniformly elliptic, i.e.\ there exist
\(\underline{a},\overline{a}\in(0,\infty)\) such that
\begin{equation}\label{eq:uniElliptic}
\underline{a}\leq\essinf_{{\bf x}\in\mathcal{D}}a({\bf x},\omega)\leq\esssup_{{\bf x}\in\mathcal{D}}a({\bf x},\omega)\leq\overline{a}
\end{equation}
\(\mathbb{P}\)-almost surely.

We make the crucial assumption that the random variation in the coefficient \(a({\bf x,\omega})\) is independent
of the random variation in the domain \(D(\omega)\). We note that under the same constraint of independence, 
it is also possible to consider random loadings or even random boundary data.
In order to model the random domain, as in \cite{GP16,HPS16}, we assume the existence of a uniform \(C^1\)-diffeomorphism 
\({\bf V}\colon\overline{D_\refd}\times\Omega\to\mathbb{R}^d\), i.e.\ 
\begin{equation}\label{eq:unif}
\|{\bf V}(\omega)\|_{C^1(\overline{D_\refd};\mathbb{R}^d)},\|{\bf V}^{-1}(\omega)\|_{C^1(\overline{D_\refd};\mathbb{R}^d)}\leq C_{\operatorname{uni}}\end{equation} for \(\mathbb{P}\)-almost every \(\omega\in\Omega\), such that 
\[
D(\omega)={\bf V}(D_\refd,\omega).
\]
Applying the domain mapping approach, the variational formulation reads then:
\begin{equation}\label{eq:varForm}
\begin{aligned}
&\text{Given \(\omega\in\Omega\), find \(\hat{u}(\omega)\in H^1_0(D_\refd)\) such that}\\
&\qquad\int_{D_\refd}{\bf A}(\omega)\nabla\hat{u}(\omega)\nabla v\d{\bf X}=\int_{D_\refd}f_\refd(\omega) v\d{\bf X}\quad\text{for all }
v\in H^1_0({D_\refd}),
\end{aligned}
\end{equation}
where
\[
{\bf A}({\bf X},\omega)\isdef(a\circ{\bf V})({\bf X},\omega)\cdot({\bf V}'^\intercal{\bf V}')^{-1}({\bf X},\omega)\cdot\det{\bf V}'({\bf X},\omega)
\]
and
\[
f_\refd({\bf X},\omega)\isdef (f\circ{\bf V})({\bf X},\omega)\cdot\det{\bf V}'({\bf X},\omega).
\]
Herein, \({\bf V}'\) denotes the Jacobian of \(\bf V\). 
In addition, we make use of the convention that \({\bf X}\in D_\refd\) always refers to a material point, while
\({\bf x}\in\mathbb{R}^d\) denotes a spatial point.

Then, there is the following one-to-one correspondence between the solution \(u\) to \eqref{eq:modProb}
and the solution \(\hat{u}\) to \eqref{eq:varForm}. It holds
\begin{equation}\label{eq:solutionIdentity}
u({\bf x},\omega)=(\hat{u}\circ{\bf V}^{-1})({\bf x},\omega)\quad\text{and}\quad\hat{u}({\bf X},\omega) = (u\circ {\bf V})({\bf X},\omega),
\end{equation}
see e.g.\ \cite{HPS16}.

\section{Analytic diffusion coefficients}\label{sec:analyticDiffusion}
In this section, we shall briefly recall the essential regularity results for the solution 
\(\hat{u}\) given that \(a\) is an analytic function
and refer to \cite{CNT16,HPS16} for a more comprehensive discussion of this topic.

In order to approximate the solution \(\hat{u}\) numerically, one usually starts from a truncated
Karhunen-Lo\`eve expansion of the underlying random fields, i.e.
\begin{equation}\label{eq:KLexpansions1}
\begin{aligned}
a(\bf x,\omega) &= \E[a]({\bf x}) +\sum_{k=1}^{N}\sqrt{\lambda_k}\psi_k({\bf x})X_k(\omega),\\
{\bf V}({\bf X},\omega) & = \E[{\bf V}]({\bf X})+\sum_{k=1}^{M}\sqrt{\mu_k}\bphi_k({\bf X})Y_k(\omega)
\end{aligned}
\end{equation}
with families \(\{X_k\}_k\) and \(\{Y_k\}_k\) of uncorrelated and centered random variables. 
Moreover, \(\{\lambda_k,\psi_k\}_k\) denote the eigen pairs of the covariance operator associated to \(a\),
while \(\{\mu_k,\bphi_k\}_k\) denote the eigen pairs associated to the covariance operator of \({\bf V}\).

The Karhunen-Lo\`eve expansion exists if the underlying random field is square integrable, i.e.\ in \(L^2(\Omega;\mathcal{X})\)
for an appropriate Banach space \(\mathcal{X}\). We shall assume this in what follows for \(a\). The square integrability
of \({\bf V}\) is a straightforward consequence from \eqref{eq:unif}.

One possibility to
compute such a truncated Karhunen-Lo\`eve expansion numerically is to employ a pivoted Cholesky decomposition,
see e.g.\ \cite{HPS14} and the references therein.

As equation~\eqref{eq:KLexpansions1} already indicates, we will assume here that the diffusion coefficient is given in
spatial coordinates. In view of the subsequent analysis, this is the more challenging situation. 
Nevertheless, we emphasize that the presented analysis is also capable of dealing with a diffusion coefficient that
is represented in material coordinates, i.e.\ \(a({\bf X},\omega)\).

Here and in the following, we make the common assumption that the families \(\{X_k\}_k\)
and \(\{Y_k\}_k\)
are even independent and identically distributed. As a consequence, 
the two families are particularly independent with respect to each other. 
After a possible scaling, we have that the range of the random variables is 
\(\Gamma\isdef[-1,1]\). We further assume that the random variables exhibit densities with respect to the Lebesgue
measure, such that the corresponding push-forward measures are given by 
\[
\mathbb{P}\circ X^{-1} = \rho_X({\bf y})\d{\bf y}\quad\text{and}\quad\mathbb{P}\circ Y^{-1} = \rho_Y({\bf z})\d{\bf z},
\]
respectively. Herein, we have \({\bf y}=[y_1,\ldots,y_N]\in\Gamma^N\), while \({\bf z}=[z_1,\ldots,z_M]\in\Gamma^M\).
The densities are of product structure due to the independence of the random variables, i.e.\
\(\rho_X({\bf y})=\rho_{X,1}(y_1)\cdots\rho_{X,N}(y_N)\) and
\(\rho_Y({\bf z})=\rho_{Y,1}(z_1)\cdots\rho_{Y,M}(z_M)\), respectively.
Moreover, the centeredness yields
\begin{equation}\label{eq:centeredness}
\int_\Gamma y_k\rho_{X,k}(y_k)\d{y_k} = 0\quad\text{for }k=1,\ldots,N\quad\text{and}\quad
\int_\Gamma z_k\rho_{Y,k}(z_k)\d{z_k} = 0\quad\text{for }k=1,\ldots,M.
\end{equation}
Therefore, we can reparametrize the expansions from \eqref{eq:KLexpansions1} and write
\begin{equation}\label{eq:KLexpansions2}
\begin{aligned}
a(\bf x,{\bf y}) &= \E[a]({\bf x}) +\sum_{k=1}^{N}\sqrt{\lambda_k}\psi_k({\bf x})y_k,\quad{\bf y}\in\Gamma^N,\\
{\bf V}({\bf X},{\bf z}) & = \E[{\bf V}]({\bf X})+\sum_{k=1}^{M}\sqrt{\mu_k}\bphi({\bf X})z_k,\quad{\bf z}\in\Gamma^M.
\end{aligned}
\end{equation}
This yields the parametrized variational formulation
\begin{equation}\label{eq:varFormPara}
\begin{aligned}
&\text{Given \({\bf y}\in\Gamma^M\),  \({\bf z}\in\Gamma^N\), find \(\hat{u}({\bf y},{\bf z})\in H^1_0(D_\refd)\) such that}\\
&\qquad\int_{D_\refd}{\bf A}({\bf y},{\bf z})\nabla\hat{u}({\bf y},{\bf z})\nabla v\d{\bf X}=\int_{D_\refd}f_\refd({\bf z}) v\d{\bf X}\quad\text{for all }
v\in H^1_0({D_\refd}),
\end{aligned}
\end{equation}

The expectation of \(\hat{u}\) is given by the Bochner-type integral
\[
\E[\hat{u}]({\bf X})\isdef\int_{\Gamma^{N}}\int_{\Gamma^{M}}\hat{u}({\bf X},{\bf y},{\bf z})\rho_{X}({\bf y})\rho_{Y}({\bf z})
\d{\bf z}\d{\bf y}
\]
and its variance by
\[
\V[\hat{u}]({\bf X})\isdef\int_{\Gamma^{N}}\int_{\Gamma^{M}}\big(\hat{u}({\bf X},{\bf y},{\bf z})-\E[u]({\bf X})\big)^2
\rho_{X}({\bf y})\rho_{Y}({\bf z})\d{\bf z}\d{\bf y}.
\]
In a similar fashion, we can also compute other quantities of interest, e.g.\ \(\E[F(\hat{u})]\), where 
\[
F\colon H^1_0(D_\refd)\to\mathbb{R}\]
is 
a continuous and linear functional. 
Note that there exists a version of Fubini's theorem for Bochner integrals. It guarantees that the order of integration
in the expressions for the expectation and the variance can be interchanged, see \cite{HP57}.

As we have seen so far, the computation of quantities of interest results in very high dimensional quadrature problems. 
In order to solve these quadrature problems
efficiently, one usually exploits the smoothness of the solution \(\hat{u}\) with respect to the parameters \({\bf y}\in\Gamma^{N}\)
and \({\bf z}\in\Gamma^{M}\):

Let all eigenfunctions \(\psi_k\) of the diffusion coefficient \(a\) be analytic, i.e.\ 
\begin{equation}\label{eq:analyticEF}
\|\partial^\balpha_{\bf x}\psi_k\|_{L^\infty(\mathcal{D})}\leq\balpha!\rho^{-|\balpha|}c_\psi,\quad \balpha\in\mathbb{N}^d,
\end{equation}

for \(\rho\in (0,1]\) and \(c_\psi>0\) uniformly in \(k\). Moreover, we introduce the quantities
\begin{equation}\label{eq:gammaDef}
\begin{aligned}
\bgamma_a&\isdef\Big[\big\|\sqrt{\lambda_1}\psi_1\big\|_{L^\infty(\mathcal{D})},\ldots,\big\|\sqrt{\lambda_N}\psi_N\big\|_{L^\infty(\mathcal{D})}\Big],\\
\bgamma_{\bf V}&\isdef\Big[\big\|\sqrt{\mu_1}\bphi_1\big\|_{W^{1,\infty}(D_\refd;\mathbb{R}^d)},\ldots,\big\|\sqrt{\mu_M}\bphi_M\big\|_{W^{1,\infty}(D_\refd;\mathbb{R}^d)}\Big].
\end{aligned}
\end{equation} 
In addition, we denote the concatinatination of two vectors or multi indices \(\balpha,\bbeta\) by \([\balpha;\bbeta]\). In this view, we also define
\(\bgamma\isdef[\bgamma_a;\bgamma_{\bf V}]\).

Then there holds, cp.\ \cite[Lemma 5]{HPS16},
\[
\|\partial^\balpha_{\bf z}(\psi_k\circ{\bf V})\|_{L^\infty(\Gamma^{M};L^\infty(D_\refd))}\leq|\balpha|!c_\psi\bigg(\frac{d}{\rho\log 2}\bigg)^{|\balpha|}
\bgamma_{\bf V}^\balpha\quad\text{for all }\balpha\in\mathbb{N}^M,
\]
where the power of a vector and a multi index has always to be interpreted as the product of the powers of each of the components, i.e.\
\[
{\bf x}^{\balpha}\isdef x_1^{\alpha_1}\cdots x_d^{\alpha_d},\quad{\bf x}\in\mathbb{R}^d,\balpha\in\mathbb{N}^d.
\]
Assuming, for the sake of simplicity, that the expectation \(\E[a]\) satisfies the same bound \eqref{eq:analyticEF}, we obtain 
a similar result for \(\E[a]\circ{\bf V}\).
Therefore, we can estimate the derivatives of the transported diffusion coefficient \(a\circ{\bf V}\) according to
\[
\Big\|\partial^{[\balpha_;\bbeta]}_{[{\bf y}; {\bf z}]}(a\circ{\bf V})\Big\|_{L^\infty(\Gamma^{M+N};L^\infty(D_\refd))}
\leq|\bbeta|!c_\psi\bigg(\frac{d}{\rho\log 2}\bigg)^{|\bbeta|}\bgamma^\bbeta\bigg(1+\sum_{k=1}^N\sqrt{\lambda_k}\bigg),\quad\balpha={\bf 0}
\]
and
\[
\Big\|\partial^{[\balpha_;\bbeta]}_{[{\bf y}; {\bf z}]}(a\circ{\bf V})\Big\|_{L^\infty(\Gamma^{M+N};L^\infty(D_\refd))}
\leq|\bbeta|!\sqrt{\lambda_k}c_\psi\bigg(\frac{d}{\rho\log 2}\bigg)^{|\bbeta|}\bgamma^\bbeta,\quad\alpha_k =1
\]
and \(\alpha_j=0\) for \(j\neq k\).
Since \(a\) is affine with respect to \({\bf y}\in\Gamma^N\), all higher order derivatives with respect to any \(y_k\) vanish.
Using the crude bound 
\[
\Big\|\partial^{[\balpha_;\bbeta]}_{[{\bf y}; {\bf z}]}(a\circ{\bf V})\Big\|_{L^\infty(\Gamma^{(N+M)};L^\infty(D_\refd))}
\leq (|\balpha|+|\bbeta|)! Cc^{|\balpha|+|\bbeta|}\bgamma^{[\balpha;\bbeta]}
\]
for some constants \(C,c>0\),
it is then easy to show that the derivatives of the transported diffusion coefficient 
\({\bf A}({\bf X},{\bf y},{\bf z})\) exhibit the same behavior, cf.\cite{HPS16}. 
Therefore, we can conclude in the same way as in \cite[Theorem 5]{HPS16} and
obtain the following result.
\begin{theorem}\label{thm:smoothcase} Let \(\hat{u}({\bf y},{\bf z})\) be the solution to \eqref{eq:varFormPara} and assume that the eigenfunctions \(\{\psi_k\}_k\) in the
Karhunen-Lo\`eve expansion of the diffusion coefficient are analytic in accordance with \eqref{eq:analyticEF}. Then, there exist constants
\(C,c>0\) such that
\[
\Big\|\partial^{[\balpha;\bbeta]}_{[{\bf y}; {\bf z}]}\hat{u}\Big\|_{L^\infty(\Gamma^{(N+M)};H^1_0(D_\refd))}
\leq (|\balpha|+|\bbeta|)! Cc^{|\balpha|+|\bbeta|}\bgamma^{[\balpha,\bbeta]}\quad\text{for all }\balpha\in\mathbb{N}^N,\bbeta\in\mathbb{N}^M.
\]
\end{theorem}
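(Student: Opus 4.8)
The plan is to differentiate the parametrized variational formulation \eqref{eq:varFormPara} with respect to the parameters and to convert it into a recursion for the parametric derivatives of $\hat{u}$, which is then resolved by induction on the total differentiation order $|\balpha|+|\bbeta|$. Writing $\bnu\isdef[\balpha;\bbeta]\in\mathbb{N}^{N+M}$ for the combined multi-index and abbreviating $\partial^{\bnu}\isdef\partial^{[\balpha;\bbeta]}_{[{\bf y};{\bf z}]}$, I would first apply the Leibniz rule to the bilinear form. This yields, for all $v\in H^1_0(D_\refd)$,
\[
\sum_{\bmu\leq\bnu}\binom{\bnu}{\bmu}\int_{D_\refd}(\partial^{\bmu}{\bf A})\nabla(\partial^{\bnu-\bmu}\hat{u})\nabla v\d{\bf X}=\partial^{\bnu}\int_{D_\refd}f_\refd v\d{\bf X}.
\]
Isolating the term $\bmu={\bf 0}$, in which the full matrix ${\bf A}$ multiplies the top-order derivative $\partial^{\bnu}\hat{u}$, moves all remaining contributions, which only involve strictly lower-order derivatives of $\hat{u}$, to the right-hand side. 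The ${\bf z}$-derivatives of the loading $f_\refd$ obey analogous factorial bounds by the same composition argument that was used above for $\psi_k\circ{\bf V}$, so that the right-hand side only strengthens the base constant.

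The next step is to test the resulting equation with $v=\partial^{\bnu}\hat{u}$ and to exploit the uniform ellipticity inherited from \eqref{eq:uniElliptic} and \eqref{eq:unif}. Coercivity of ${\bf A}$ on $H^1_0(D_\refd)$, with a constant $C_{\mathrm{coer}}$ depending only on $\underline{a}$ and $C_{\operatorname{uni}}$, yields the a priori bound
\[
\big\|\partial^{\bnu}\hat{u}\big\|_{H^1_0(D_\refd)}\leq C_{\mathrm{coer}}\Big(\big\|\partial^{\bnu}f_\refd\big\|+\sum_{{\bf 0}\neq\bmu\leq\bnu}\binom{\bnu}{\bmu}\big\|\partial^{\bmu}{\bf A}\big\|_{L^\infty}\big\|\partial^{\bnu-\bmu}\hat{u}\big\|_{H^1_0(D_\refd)}\Big),
\]
uniformly in the parameters, which is precisely the recursion driving the induction. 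The base case $\bnu={\bf 0}$ is the Lax–Milgram estimate $\|\hat{u}\|_{H^1_0(D_\refd)}\leq C\|f_\refd\|$, which fixes the constant $C$ appearing in the claimed bound.

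For the induction I would assume the asserted estimate for all multi-indices of order strictly below $|\bnu|$, insert it together with the coefficient bound $\|\partial^{\bmu}{\bf A}\|_{L^\infty}\leq|\bmu|!\,\tilde{C}\tilde{c}^{|\bmu|}\bgamma^{\bmu}$ (established above for ${\bf A}$) into the recursion, and factor out $\bgamma^{\bnu}=\bgamma^{\bmu}\bgamma^{\bnu-\bmu}$. Using the identity $\binom{\bnu}{\bmu}|\bmu|!\,(|\bnu|-|\bmu|)!=|\bnu|!$ reduces the remaining sum to a purely scalar combinatorial count over the shells $|\bmu|=j$, and the number of such terms, together with the geometric factor $\tilde{c}^{|\bmu|}c^{|\bnu-\bmu|}$, has to be absorbed into $c^{|\bnu|}$ by choosing $c$ sufficiently large relative to $\tilde{c}$ and $C_{\mathrm{coer}}$. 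I expect the main obstacle to be exactly this combinatorial bookkeeping: one must verify that summing the binomial-weighted products of factorials does not destroy the clean factorial growth $(|\balpha|+|\bbeta|)!$, which hinges on the factorial identity above and on a choice of $c$ for which the induced geometric series in $j$ converges to a constant independent of $\bnu$. This is the step carried out in \cite[Theorem 5]{HPS16}, whose argument transfers once the coefficient estimates for ${\bf A}$ recalled above are in place.
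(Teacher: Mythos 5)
Your proposal follows essentially the same route as the paper, which establishes the factorial derivative bounds for the transported coefficient ${\bf A}$ and then defers to \cite[Theorem 5]{HPS16}: Leibniz differentiation of the parametrized variational formulation, isolation of the top-order term, coercivity of ${\bf A}$ (inherited from \eqref{eq:uniElliptic} and \eqref{eq:unif}), and induction on $|\balpha|+|\bbeta|$ closed by a geometric series after choosing $c$ large enough. The only imprecision is that $\binom{\bnu}{\bmu}\,|\bmu|!\,(|\bnu|-|\bmu|)!=|\bnu|!$ is not an identity for multi-index binomial coefficients (it fails already for $\bnu=(1,1)$, $\bmu=(1,0)$); what one actually uses is the favorable inequality $\binom{\bnu}{\bmu}\leq\binom{|\bnu|}{|\bmu|}$, or equivalently the shell sum $\sum_{|\bmu|=j}\binom{\bnu}{\bmu}=\binom{|\bnu|}{j}$, which is precisely the scalar reduction over $|\bmu|=j$ that you describe, so the argument goes through unchanged.
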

Regularity estimates of this form allow for sparse quadrature and collocation methods. As we have seen, they come at
the cost of high regularity requirements of the underlying data, see also \cite{GP16,HPS16,CNT16,HSSS18}. One possibility to bypass this drawback in the case of small rough perturbations
is considered in the following section.
\section{Rough diffusion coefficients}\label{sec:roughDiffusion}
In this section, we assume that the diffusion coefficient under consideration can be decomposed
into an analytic, deterministic and uniformly elliptic part \(a_s({\bf x})\) satisfying the bounds from 
\eqref{eq:uniElliptic} and a 
non-smooth centered random part \(a_r({\bf x,\omega})\) of small magnitude, i.e.
\begin{equation}\label{eq:DiffCoeff}
a({\bf x},\omega)=a_s({\bf x})+\varepsilon a_r({\bf x,\omega})\quad\text{for }0<\varepsilon\ll 1,
\end{equation}
where \(\|a_r({\bf x,\omega})\|_{L^\infty(\Omega;L^\infty(D(\omega)))}\leq 1\) and \(\E[a_r]({\bf x})\equiv 0\). 
Note that the results presented in this section also remain valid if \(a_s\) is also subjected to randomness.

We assume that \(a_r\) exhibits a Karhunen-Lo\`eve expansion of the form
\[
a_r({\bf x},{\bf y}) =\sum_{k=1}^{N}\sqrt{\lambda_k}\psi_k({\bf x})y_k,\quad{\bf y}\in\Gamma^N
\]
with the corresponding product measure \(\rho_X({\bf y})\d{\bf y}\). Moreover, we assume that the
random vector field is represented as in \eqref{eq:KLexpansions2}.

Next, we decompose the transported diffusion coefficient according to
\begin{equation}\label{eq:transportedCoefficient}
 {\bf A}({\bf X},{\bf y},{\bf z})={\bf A}_s({\bf X},{\bf z}) + \varepsilon {\bf A}_r({\bf X},{\bf y},{\bf z}),
\end{equation}
with 
\[
{\bf A}_s({\bf X},{\bf z})\isdef (a_s\circ{\bf V})({\bf X},{\bf z})({\bf V}'^\intercal{\bf V}')^{-1}({\bf X},{\bf z})\det{\bf V}'({\bf X},{\bf z})
\]
and
\[
{\bf A}_r({\bf X},{\bf y},{\bf z})\isdef (a_r\circ{\bf V})({\bf X},{\bf y},{\bf z})({\bf V}'^\intercal{\bf V}')^{-1}({\bf X},{\bf z})\det{\bf V}'({\bf X},{\bf z}).
\]
The independence and the centeredness, cf.\ \eqref{eq:centeredness}, of the families \(\{X_k\}_k\) and \(\{Y_k\}_k\) imply
\begin{align*}
\E[{\bf A}_r]({\bf X})&=\int_{\Gamma^N}\int_{\Gamma^M} 
\sum_{k=1}^{N}\sqrt{\lambda_k}(\psi_k\circ{\bf V})({\bf X},{\bf z})y_k
({\bf V}'^\intercal{\bf V}')^{-1}({\bf X},{\bf z})\det{\bf V}'({\bf X},{\bf z})
\rho_X({\bf y})\rho_Y({\bf z})\d{\bf z}\d{\bf y}\\
&=
\sum_{k=1}^{N}\sqrt{\lambda_k}\int_{\Gamma^M} (\psi_k\circ {\bf V})({\bf X},{\bf z})({\bf V}'^\intercal{\bf V}')^{-1}({\bf X},{\bf z})\det{\bf V}'({\bf X},{\bf z})\rho_Y({\bf z})\d{\bf z}\,\int_{\Gamma^N} y_k\rho_X({\bf y})\d{\bf y}\\
&= 0.
\end{align*}
Therefore, the rough part \({\bf A}_r({\bf X},{\bf y},{\bf z})\) of the transported diffusion coefficient remains centered. 

The pivotal idea is now, to linearize the coefficient-to-solution map in a vicinity of
the smooth part \(a_s\) of the diffusion coefficient and treat the perturbation \(a_r\) as a small systematic error.
To that end, we generalize the following theorem from \cite{HPS13a} for the case of a random 
diffusion coefficient and a random loading.
\begin{lemma}\label{lem:FrechetDerivative}
Let $\hat{u}_0({\bf z})\in H^1_0\big(D_\refd\big)$ be the solution to
\begin{equation}\label{eq:uZeroHat}
  -\div[{\bf A}_s({\bf z})\nabla\hat{u}_0({\bf z})]=f_\refd({\bf z})\ \text{in $D_\refd$},\quad
	\hat{u}_0({\bf z})=0\ \text{on $\partial D_\refd$}.
\end{equation}
Then, for \({\bf A}({\bf z})\isdef {\bf A}_s({\bf z})+\delta{\bf A}\), the mapping 
\[
  G\colon L^\infty(D_\refd;\mathbb{R}^{d\times d})\to H^1_0\big(D_\refd\big), \quad {\bf A}({\bf z})\mapsto G\big({\bf A}({\bf z})\big)\isdef\hat{u}({\bf z})
\]	
is Fr\'echet differentiable, where the derivative 
$\delta\hat{u}({\bf z}) = \delta\hat{u}[\delta{\bf A}]({\bf z})\in H_0^1(D_\refd)$ with respect to the direction 
$\delta{\bf A}\in L^\infty(D_\refd;\mathbb{R}^{d\times d})$ is given by
\begin{equation}	\label{eq:der-A}
  -\div\big({\bf A}_s({\bf z})\nabla \delta\hat{u}[\delta{\bf A}]({\bf z})\big)=\div\big(\delta{\bf A}\nabla\hat{u}_0({\bf z})\big)\ \text{in $D_\refd$},\quad
	\delta \hat{u}[\delta{\bf A}]({\bf z})=0\ \text{on $\partial D_\refd$}.
\end{equation}
\end{lemma}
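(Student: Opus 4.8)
The plan is to establish Fréchet differentiability of $G$ at ${\bf A}_s({\bf z})$ by verifying the three standard ingredients: that $G$ is well defined on a neighborhood of ${\bf A}_s({\bf z})$, that the candidate derivative $\delta{\bf A}\mapsto\delta\hat{u}[\delta{\bf A}]({\bf z})$ defined through \eqref{eq:der-A} is a bounded linear operator, and that the associated remainder is of quadratic order in $\|\delta{\bf A}\|_{L^\infty}$. Throughout, the parameter ${\bf z}\in\Gamma^M$ is held fixed. The starting observation is that the transported matrix ${\bf A}_s({\bf z})$ is symmetric and uniformly positive definite on $D_\refd$: indeed, $({\bf V}'^\intercal{\bf V}')^{-1}$ is symmetric positive definite and, together with $\det{\bf V}'>0$, has eigenvalues bounded away from $0$ and $\infty$ uniformly in ${\bf X}$ and ${\bf z}$ by the diffeomorphism bound \eqref{eq:unif}; multiplication by the scalar $a_s\circ{\bf V}$, which lies between $\underline{a}$ and $\overline{a}$ by \eqref{eq:uniElliptic}, preserves these bounds. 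Hence the bilinear form $b(u,v)\isdef\int_{D_\refd}{\bf A}_s({\bf z})\nabla u\cdot\nabla v\d{\bf X}$ is bounded and coercive on $H^1_0(D_\refd)$, with constants independent of ${\bf z}$.

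With coercivity in hand, Lax--Milgram gives well-posedness of \eqref{eq:uZeroHat} and, for $\|\delta{\bf A}\|_{L^\infty}$ small enough that ${\bf A}_s({\bf z})+\delta{\bf A}$ remains coercive, of the perturbed variational problem \eqref{eq:varForm} with coefficient ${\bf A}_s({\bf z})+\delta{\bf A}$ and right-hand side $f_\refd({\bf z})$; this makes $G$ well defined near ${\bf A}_s({\bf z})$, with an a priori bound $\|\hat{u}\|_{H^1_0(D_\refd)}\le C$ uniform for small $\delta{\bf A}$. The right-hand side $\div(\delta{\bf A}\nabla\hat{u}_0({\bf z}))$ of \eqref{eq:der-A} lies in $H^{-1}(D_\refd)$ since $\delta{\bf A}\nabla\hat{u}_0({\bf z})\in L^2(D_\refd;\mathbb{R}^d)$, so Lax--Milgram again furnishes a unique $\delta\hat{u}[\delta{\bf A}]({\bf z})\in H^1_0(D_\refd)$ whose weak form reads $b(\delta\hat{u},v)=-\int_{D_\refd}\delta{\bf A}\nabla\hat{u}_0({\bf z})\cdot\nabla v\d{\bf X}$. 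Linearity of $\delta{\bf A}\mapsto\delta\hat{u}[\delta{\bf A}]({\bf z})$ is immediate from the linear dependence of this right-hand side on $\delta{\bf A}$, and testing with $v=\delta\hat{u}$ yields $\|\delta\hat{u}[\delta{\bf A}]({\bf z})\|_{H^1_0(D_\refd)}\le C\|\delta{\bf A}\|_{L^\infty}\|\hat{u}_0({\bf z})\|_{H^1_0(D_\refd)}$, so the operator is bounded.

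The final and central computation is to estimate the remainder $w\isdef\hat{u}-\hat{u}_0-\delta\hat{u}$, where $\hat{u}=G({\bf A}_s({\bf z})+\delta{\bf A})$. Subtracting the weak form of \eqref{eq:uZeroHat} from that of the perturbed problem gives $b(\hat{u}-\hat{u}_0,v)=-\int_{D_\refd}\delta{\bf A}\nabla\hat{u}\cdot\nabla v\d{\bf X}$, whence $\|\hat{u}-\hat{u}_0\|_{H^1_0(D_\refd)}\le C\|\delta{\bf A}\|_{L^\infty}\|\hat{u}\|_{H^1_0(D_\refd)}=O(\|\delta{\bf A}\|_{L^\infty})$ by the uniform a priori bound. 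Combining the three weak forms then shows that $w$ satisfies $b(w,v)=-\int_{D_\refd}\delta{\bf A}\nabla(\hat{u}-\hat{u}_0)\cdot\nabla v\d{\bf X}$; testing with $v=w$ and invoking coercivity together with the previous estimate produces $\|w\|_{H^1_0(D_\refd)}\le C\|\delta{\bf A}\|_{L^\infty}\|\hat{u}-\hat{u}_0\|_{H^1_0(D_\refd)}=O(\|\delta{\bf A}\|_{L^\infty}^2)$. This quadratic bound is in particular $o(\|\delta{\bf A}\|_{L^\infty})$, which is exactly Fréchet differentiability of $G$ with derivative $\delta\hat{u}[\delta{\bf A}]({\bf z})$.

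The only genuinely substantial point is the first step, namely the uniform coercivity of ${\bf A}_s({\bf z})$; everything downstream is a routine energy estimate. I would therefore spend the most care on checking that the spectral bounds on $({\bf V}'^\intercal{\bf V}')^{-1}\det{\bf V}'$ follow quantitatively from \eqref{eq:unif}, since this is what guarantees that the coercivity constant, and hence all implicit constants $C$, can be chosen independently of ${\bf z}$. A minor additional verification is that the smallness threshold on $\|\delta{\bf A}\|_{L^\infty}$ needed to keep ${\bf A}_s({\bf z})+\delta{\bf A}$ coercive is likewise uniform in ${\bf z}$, which again follows from the ${\bf z}$-independent coercivity of ${\bf A}_s({\bf z})$.
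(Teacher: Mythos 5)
Your proposal is correct and follows essentially the same route as the paper: the identical remainder identity $b(\hat{u}-\hat{u}_0-\delta\hat{u},v)=-\int_{D_\refd}\delta{\bf A}\nabla(\hat{u}-\hat{u}_0)\cdot\nabla v\d{\bf X}$ combined with the first-order bound $\|\hat{u}-\hat{u}_0\|_{H^1_0}=O(\|\delta{\bf A}\|_{L^\infty})$ to get a quadratic remainder. The only (harmless) difference is organizational: you prove Fr\'echet differentiability directly with the remainder measured against $\|\delta{\bf A}\|_{L^\infty}$, whereas the paper first extracts the G\^ateaux derivative along a fixed direction scaled by $\varepsilon$ and then upgrades; your version also spells out the uniform coercivity of ${\bf A}_s({\bf z})$ from \eqref{eq:unif} and \eqref{eq:uniElliptic}, which the paper leaves implicit.
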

\begin{proof} Let \(\varepsilon>0\) be sufficiently small such that 
\({\bf A}_s({\bf z})+\varepsilon\delta{\bf A}\) is still uniformly elliptic.
We consider the diffusion problems
\begin{align*}
-\div\big({\bf A}_s({\bf z})\nabla\hat{u}_0({\bf z})\big) 
&=f_\refd({\bf z})\text{ in }D_\refd,\quad u({\bf z})=0\text{ on }\partial D_\refd,\\
-\div\big(({\bf A}_s({\bf z})+\varepsilon\delta{\bf A})\nabla\hat{u}_\varepsilon({\bf z})\big) 
&=f_\refd({\bf z})\text{ in }D_\refd,\quad u_\varepsilon({\bf z})=0\text{ on }\partial D_\refd.
\end{align*}
Considering the difference of their respective weak formulations yields for \((\hat{u}_\varepsilon-\hat{u}_0)({\bf z})\in H^1_0(D_\refd)\) the equation
\begin{equation}\label{eq:coeffDiff}
\int_{D_\refd}{\bf A}_s({\bf z})\nabla(\hat{u}_\varepsilon-\hat{u}_0)({\bf z})\nabla v 
+ \varepsilon\delta{\bf A}\nabla\hat{u}_\varepsilon\nabla v\d{\bf X} = 0\quad
\text{for all }v\in H^1_0(D_\refd).
\end{equation}
Hence, the function \(\frac 1 \varepsilon (\hat{u}_\varepsilon-\hat{u}_0)({\bf z})\) satisfies the variational formulation
\[
\int_{D_\refd}{\bf A}_s({\bf z})\nabla\frac{(\hat{u}_\varepsilon-\hat{u}_0)({\bf z})}{\varepsilon}\nabla v\d{\bf X}
=-\int_{D_\refd}\delta{\bf A}\nabla\hat{u}_\varepsilon({\bf z})\nabla v\d{\bf X}
\quad\text{for all }v\in H^1_0(D_\refd).
\]
From the uniform ellipticity of \({\bf A}_s({\bf z})\) and the boundedness of \(\delta{\bf A}\) it is easy to derive that
\[
\|(\hat{u}_\varepsilon-\hat{u}_0)({\bf z})\|_{H^1_0(D_\refd)}\to 0\quad\text{as }\varepsilon\to 0.
\]
Therefore, we conclude that 
\[
\delta\hat{u}[\delta{\bf A}]({\bf z})\isdef\lim_{\varepsilon\to 0}\frac{(\hat{u}_\varepsilon-\hat{u}_0)({\bf z})}{\varepsilon}
\]
is the G\^ateaux derivative of \(\hat{u}({\bf z})\) in direction \(\delta{\bf A}\). It remains to show that \(\delta\hat{u}[\delta{\bf A}]({\bf z})\)
is also the Frech\'et derivative. Since  \(\delta\hat{u}[\delta{\bf A}]({\bf z})\) is obviously linear, the Frech\'et differentiability follows from
\begin{align*}
&\|({\hat u}_\varepsilon-\hat{u}_0)({\bf z})-\varepsilon\delta\hat{u}[\delta{\bf A}]({\bf z})\|_{H^1_0(D_\refd)}\\
&\qquad\leq C\sup_{v\in H^1_0(D_\refd)}\frac{1}{\|v\|_{H^1_0(D_\refd)}}\int_{D_\refd}{\bf A}_s({\bf z})\nabla ({\hat u}_\varepsilon-\hat{u}_0)({\bf z})-\varepsilon\delta\hat{u}[\delta{\bf A}]({\bf z})\nabla v\d{\bf X}\\
&\qquad = C\sup_{v\in H^1_0(D_\refd)}\frac{1}{\|v\|_{H^1_0(D_\refd)}}
\bigg(\int_{D_\refd}\big[\big({\bf A}_s({\bf z})+\varepsilon\delta{\bf A}\big)\nabla {\hat u}_\varepsilon({\bf z})-{\bf A}_s({\bf z})\nabla\hat{u}_0({\bf z})\big]\nabla v\d{\bf X}\\
&\qquad\qquad\qquad\qquad-\varepsilon\int_{D_\refd}\big[\delta{\bf A}\nabla\hat{u}_\varepsilon({\bf z})+{\bf A}_s({\bf z})
\nabla\delta\hat{u}[\delta{\bf A}]({\bf z})\big]\nabla v\d{\bf X}\bigg)\\
&\qquad= \varepsilon C\sup_{v\in H^1_0(D_\refd)}\frac{1}{\|v\|_{H^1_0(D_\refd)}}\int_{D_\refd}\delta{\bf A}\nabla(\hat{u}_\varepsilon-\hat{u}_0)({\bf z})\nabla v\d{\bf X}\\
&\qquad\leq \varepsilon C\|\delta{\bf A}\|_{L^\infty(D_\refd;\mathbb{R}^{d\times d})}\|(\hat{u}_\varepsilon-\hat{u}_0)({\bf z})\|_{H^1_0(D_\refd)}.
\end{align*}
Herein, the first inequality follows from the uniform ellipticity, i.e.\ \(C>0\) is the inverse of the ellipticity constant, while the second equality follows from \eqref{eq:der-A} and \eqref{eq:coeffDiff}.
\end{proof}

The proof of the previous lemma requires only the boundedness of the perturbation \(\delta{\bf A}\). 
Therefore, the following corollary is immediate.
\begin{corollary} The derivative \(\delta\hat{u}({\bf y},{\bf z})\isdef\delta\hat{u}[{\bf A}_r]({\bf y},{\bf z})\) is given by the diffusion problem
\[
  -\div\big({\bf A}_s({\bf z})\nabla \delta\hat{u}({\bf y},{\bf z})\big)=\div\big({\bf A}_r({\bf y},{\bf z})\nabla\hat{u}_0({\bf z})\big)\ \text{in $D_\refd$},\
	\delta \hat{u}({\bf y},{\bf z})=0\ \text{on $\partial D_\refd$}.
\]
Moreover, due to the linearity of the Frech\'et derivative, there holds
\begin{equation}\label{eq:derExpansion}
\delta\hat{u}({\bf y},{\bf z})=\sum_{k=1}^N\sqrt{\lambda_k}
\delta\hat{u}[(\psi_k\circ{\bf V})({\bf z})({\bf V}'^\intercal{\bf V}')^{-1}({\bf z})\det{\bf V}'({\bf z})]y_k.
\end{equation}
\end{corollary}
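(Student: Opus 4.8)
The plan is to read both assertions as direct consequences of Lemma~\ref{lem:FrechetDerivative}: the first by specialising the perturbation direction to $\delta{\bf A}={\bf A}_r({\bf y},{\bf z})$, and the second by exploiting that ${\bf A}_r$ depends affinely (in fact linearly) on ${\bf y}$ together with the linearity of the Fr\'echet derivative in its direction argument. Throughout, ${\bf z}\in\Gamma^M$ is carried along as an inert parameter, since the lemma is already formulated for a fixed geometry ${\bf V}(\cdot,{\bf z})$.

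First I would verify that ${\bf A}_r({\bf y},{\bf z})$ is an admissible direction, i.e.\ that it belongs to $L^\infty(D_\refd;\mathbb{R}^{d\times d})$ for every fixed $({\bf y},{\bf z})\in\Gamma^N\times\Gamma^M$. This is immediate from the definition of ${\bf A}_r$: the factor $a_r\circ{\bf V}$ is essentially bounded because $\|a_r\|_{L^\infty(\Omega;L^\infty(D(\omega)))}\leq 1$, while the geometric factors $({\bf V}'^\intercal{\bf V}')^{-1}$ and $\det{\bf V}'$ are uniformly bounded by virtue of the uniform $C^1$-diffeomorphism property~\eqref{eq:unif}. As the remark preceding the corollary points out, the proof of Lemma~\ref{lem:FrechetDerivative} uses nothing about the direction beyond its boundedness; hence it applies verbatim with $\delta{\bf A}={\bf A}_r({\bf y},{\bf z})$, and $\delta\hat{u}({\bf y},{\bf z})=\delta\hat{u}[{\bf A}_r({\bf y},{\bf z})]({\bf z})$ satisfies~\eqref{eq:der-A} with this choice of $\delta{\bf A}$. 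This establishes the first displayed equation.

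For the expansion~\eqref{eq:derExpansion}, I would substitute the Karhunen-Lo\`eve expansion $(a_r\circ{\bf V})({\bf X},{\bf y},{\bf z})=\sum_{k=1}^{N}\sqrt{\lambda_k}(\psi_k\circ{\bf V})({\bf X},{\bf z})y_k$ into the definition of ${\bf A}_r$. Since the geometric factors are independent of ${\bf y}$, this gives
\[
{\bf A}_r({\bf y},{\bf z})=\sum_{k=1}^{N}\sqrt{\lambda_k}\,y_k\,(\psi_k\circ{\bf V})({\bf z})({\bf V}'^\intercal{\bf V}')^{-1}({\bf z})\det{\bf V}'({\bf z}),
\]
so that, for fixed ${\bf z}$, the direction ${\bf A}_r({\bf y},{\bf z})$ is a finite linear combination of the ${\bf y}$-independent directions $(\psi_k\circ{\bf V})({\bf z})({\bf V}'^\intercal{\bf V}')^{-1}({\bf z})\det{\bf V}'({\bf z})$ with scalar coefficients $\sqrt{\lambda_k}\,y_k$. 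Invoking the linearity of $\delta\hat{u}[\,\cdot\,]({\bf z})$ in its direction argument, already noted in the proof of Lemma~\ref{lem:FrechetDerivative}, I may pull the finite sum and the scalars out of the derivative, which is exactly~\eqref{eq:derExpansion}. There is no genuine obstacle here: the entire argument is a specialisation of the preceding lemma, and the only two points worth spelling out are the boundedness of ${\bf A}_r$ (needed to make it an admissible direction) and the fact that linearity of the Fr\'echet derivative is precisely what legitimises interchanging it with the finite sum.
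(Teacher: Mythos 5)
Your proposal is correct and follows exactly the route the paper intends: the paper itself dispenses with a proof by noting that Lemma~\ref{lem:FrechetDerivative} only requires boundedness of the perturbation direction, and you simply make explicit the two points left implicit there, namely that \({\bf A}_r({\bf y},{\bf z})\in L^\infty(D_\refd;\mathbb{R}^{d\times d})\) by \eqref{eq:unif} and the bound on \(a_r\), and that linearity of the Fr\'echet derivative in its direction argument combined with the Karhunen--Lo\`eve expansion of \(a_r\) yields \eqref{eq:derExpansion}. No gaps; this matches the paper's argument.
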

As a consequence of Lemma~\ref{lem:FrechetDerivative} and the subsequent corollary, 
we may expand $\hat{u}_\varepsilon({\bf y},{\bf z})$ into 
a first order Taylor expansion 
\begin{equation}	\label{eq:Taylor-A}
  \hat{u}_\varepsilon({\bf y},{\bf z}) =\hat{u}_0({\bf z}) + \varepsilon \delta \hat{u}[{\bf A}_r({\bf y},{\bf z})]({\bf y},{\bf z})
		+ \mathcal{O}(\varepsilon^2), 
\end{equation}
where \(\hat{u}_0({\bf z})\) is the solution to \eqref{eq:uZeroHat}.
Based on this expansion, the next theorem gives us expansions for the expectation and the variance of \(\hat{u}_\varepsilon\), see also \cite{HPS13a}.

\begin{theorem}		\label{thm:A}
For $\varepsilon > 0$ 
sufficiently small, there holds
\[
  \E[\hat{u}_\varepsilon] = \E[\hat{u}_0]+\mathcal{O}(\varepsilon^2)\quad\text{in $H^1_0(D_\refd)$}.
\]
Herein, $\hat{u}_0\in H^1_0(D_\refd)$ 
satisfies the diffusion problem \eqref{eq:uZeroHat}.
Moreover, there holds
\[
  \V[\hat{u}_\varepsilon] = \V[\hat{u}_0]+\mathcal{O}(\varepsilon^2)\quad\text{in $W^{1,1}_0(D_\refd)$}.
\]
\end{theorem}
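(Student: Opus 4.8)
The plan is to substitute the first-order Taylor expansion \eqref{eq:Taylor-A} into the definitions of $\E[\hat{u}_\varepsilon]$ and $\V[\hat{u}_\varepsilon]$ and to show that in both cases the first-order contribution in $\varepsilon$ vanishes, leaving the zeroth-order quantity plus a quadratic remainder. The vanishing is a direct structural consequence of the explicit representation \eqref{eq:derExpansion} of the Fr\'echet derivative, combined with the centeredness \eqref{eq:centeredness} and the mutual independence of the families $\{X_k\}_k$ and $\{Y_k\}_k$. As a preliminary step I would record that the Taylor remainder is uniform: writing $\hat{u}_\varepsilon=\hat{u}_0+\varepsilon\,\delta\hat{u}+\rho_\varepsilon$ with $\delta\hat{u}=\delta\hat{u}[{\bf A}_r]$, the final estimate in the proof of Lemma~\ref{lem:FrechetDerivative} gives $\|\rho_\varepsilon({\bf y},{\bf z})\|_{H^1_0(D_\refd)}\le C\varepsilon^2$, and this bound is uniform in $({\bf y},{\bf z})$ because the ellipticity constants and $\|{\bf A}_r({\bf y},{\bf z})\|_{L^\infty}$ are uniformly controlled through \eqref{eq:uniElliptic}, \eqref{eq:unif} and $\|a_r\|_{L^\infty}\le 1$. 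Hence the Bochner integral of $\rho_\varepsilon$ is again of order $\varepsilon^2$ in $H^1_0(D_\refd)$.

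For the expectation I would integrate \eqref{eq:Taylor-A}. By \eqref{eq:derExpansion} the first-order term reads $\delta\hat{u}({\bf y},{\bf z})=\sum_{k=1}^{N}\sqrt{\lambda_k}\,w_k({\bf z})\,y_k$, where each $w_k({\bf z})$ depends on the domain parameter ${\bf z}$ alone. Integrating first over ${\bf y}$ and invoking the centeredness $\int_\Gamma y_k\rho_{X,k}(y_k)\,\d y_k=0$ from \eqref{eq:centeredness} yields $\E[\delta\hat{u}]=0$, so that $\E[\hat{u}_\varepsilon]=\E[\hat{u}_0]+\E[\rho_\varepsilon]=\E[\hat{u}_0]+\mathcal{O}(\varepsilon^2)$ in $H^1_0(D_\refd)$, which is the first assertion.

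For the variance I would use that, since $\E[\delta\hat{u}]=0$, the centered solution satisfies $\hat{u}_\varepsilon-\E[\hat{u}_\varepsilon]=(\hat{u}_0-\E[\hat{u}_0])+\varepsilon\,\delta\hat{u}+\tilde\rho_\varepsilon$ with $\tilde\rho_\varepsilon=\rho_\varepsilon-\E[\rho_\varepsilon]$ still of order $\varepsilon^2$ in $H^1_0$. Squaring and taking expectations, the only first-order term is $2\varepsilon\,\E[(\hat{u}_0-\E[\hat{u}_0])\,\delta\hat{u}]$. Because $\hat{u}_0-\E[\hat{u}_0]$ depends on ${\bf z}$ alone while $\delta\hat{u}$ is affine in ${\bf y}$ with vanishing ${\bf y}$-mean, the same centeredness argument forces this term to vanish. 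The remaining contributions $\varepsilon^2\E[(\delta\hat{u})^2]$, $2\E[(\hat{u}_0-\E[\hat{u}_0])\tilde\rho_\varepsilon]$, $2\varepsilon\E[\delta\hat{u}\,\tilde\rho_\varepsilon]$ and $\E[\tilde\rho_\varepsilon^2]$ are all controlled by the bilinear estimate $\|uv\|_{W^{1,1}(D_\refd)}\le C\|u\|_{H^1_0(D_\refd)}\|v\|_{H^1_0(D_\refd)}$, which follows from Cauchy--Schwarz applied to $uv$ and to $\nabla(uv)=u\nabla v+v\nabla u$; this is precisely the reason the variance lives in $W^{1,1}_0(D_\refd)$ rather than $H^1_0(D_\refd)$. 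The result is $\V[\hat{u}_\varepsilon]=\V[\hat{u}_0]+\mathcal{O}(\varepsilon^2)$ in $W^{1,1}_0(D_\refd)$.

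The step I expect to be the main obstacle is the bookkeeping that makes the $\mathcal{O}(\varepsilon^2)$ in the variance rigorous: one must check that the products of $H^1_0$-factors with the quadratic remainder indeed close up in the $W^{1,1}_0$-norm \emph{uniformly} in $({\bf y},{\bf z})$, and that exchanging expectation and squaring is legitimate as a Bochner integral in $W^{1,1}_0(D_\refd)$. By contrast, the vanishing of the first-order terms is the clean conceptual core and reduces entirely to the linearity of the Fr\'echet derivative in $\delta{\bf A}$ together with the centeredness of the random perturbation.
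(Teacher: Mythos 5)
Your proposal is correct and follows essentially the same route as the paper: substitute the first-order Taylor expansion, kill the first-order terms in both the mean and the variance via the representation \eqref{eq:derExpansion} together with the centeredness \eqref{eq:centeredness} and independence of the two parameter families, and absorb the remaining products into an $\mathcal{O}(\varepsilon^2)$ term measured in $W^{1,1}_0(D_\refd)$. The only difference is that you make explicit two points the paper leaves implicit or delegates to a citation, namely the uniformity in $({\bf y},{\bf z})$ of the Taylor remainder and the bilinear estimate $\|uv\|_{W^{1,1}}\leq C\|u\|_{H^1_0}\|v\|_{H^1_0}$; these are welcome refinements, not a different argument.
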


\begin{proof}
To simplify notation, we abbreviate again $\delta\hat{u}({\bf y},{\bf z})\isdef\delta\hat{u}[{\bf A}_r]({\bf y},{\bf z})$.
Applying the Taylor expansion \eqref{eq:Taylor-A}, we arrive at
\[
\E[\hat{u}_\varepsilon] = \E[\hat{u}_0+\varepsilon\delta\hat{u}+\mathcal{O}(\varepsilon^2)]
=\E[\hat{u}_0]+\varepsilon\E[\delta\hat{u}]+\mathcal{O}(\varepsilon^2),
\]
by the linearity of the expectation.
Thus, we have to show that \(\E[\delta\hat{u}] = 0\). Employing \eqref{eq:derExpansion}, it holds
\begin{align*}
\E[\delta\hat{u}]
&=\int_{\Gamma^N}\int_{\Gamma^M}\delta\hat{u}({\bf y},{\bf z})\rho_X({\bf y})\rho_Y({\bf z})\d{\bf z}\d{\bf y}\\
&=\int_{\Gamma^N}\int_{\Gamma^M}\sum_{k=1}^N\sqrt{\lambda_k}
\delta\hat{u}[(\psi_k\circ{\bf V})({\bf z})({\bf V}'^\intercal{\bf V}')^{-1}({\bf z})\det{\bf V}'({\bf z})]y_k
\rho_X({\bf y})\rho_Y({\bf z})\d{\bf z}\d{\bf y}\\
&=\sum_{k=1}^N\sqrt{\lambda_k}\int_{\Gamma^M}
\delta\hat{u}[(\psi_k\circ{\bf V})({\bf z})({\bf V}'^\intercal{\bf V}')^{-1}({\bf z})\det{\bf V}'({\bf z})]
\rho_Y({\bf z})\d{\bf z}\int_{\Gamma^N}y_k\rho_X({\bf y})\d{\bf y}\\
&=0
\end{align*}
due to the centeredness of the parameters \(y_k\), see \eqref{eq:centeredness}. 

Employing the result for the expectation, we obtain for the variance
\begin{align*}
\V[\hat{u}_\varepsilon]&=\int_{\Gamma^N}\int_{\Gamma^M}(\hat{u}_\varepsilon({\bf y},{\bf z})-\E[\hat{u}_\varepsilon])^2\rho_X({\bf y})\rho_Y({\bf z})\d{\bf z}\d{\bf y}\\
&=\int_{\Gamma^N}\int_{\Gamma^M}\big((\hat{u}_0({\bf z})-\E[u_0])+\varepsilon\delta\hat{u}({\bf y},{\bf z})+\mathcal{O}(\varepsilon^2)\big)^2\rho_X({\bf y})\rho_Y({\bf z})\d{\bf z}\d{\bf y}\\
&=\V[\hat{u}_0]+2\int_{\Gamma^N}\int_{\Gamma^M}(\hat{u}_0({\bf z})-\E[u_0])\big(\varepsilon\delta\hat{u}({\bf y},{\bf z})+\mathcal{O}(\varepsilon^2)\big)\rho_X({\bf y})\rho_Y({\bf z})\d{\bf z}\d{\bf y}\\
&\phantom{=} + \varepsilon^2\int_{\Gamma^N}\int_{\Gamma^M}
\big(\delta\hat{u}({\bf y},{\bf z})+\mathcal{O}(\varepsilon)\big)^2\rho_X({\bf y})\rho_Y({\bf z})\d{\bf z}\d{\bf y}.
\end{align*}
Since the last term is bounded with respect to the \(W^{1,1}_0(D_\refd)\)-norm, see e.g.\ \cite{HPS16e}, it remains to show 
that the \(\varepsilon\) dependent part in the second term vanishes. Again making use of \eqref{eq:derExpansion},
this can be seen as follows
\begin{align*}
&2\varepsilon\int_{\Gamma^N}\int_{\Gamma^M}(\hat{u}_0({\bf z})-\E[u_0])\delta\hat{u}({\bf y},{\bf z})\rho_X({\bf y})\rho_Y({\bf z})\d{\bf z}\d{\bf y}\\
&\qquad = 
\int_{\Gamma^N}\int_{\Gamma^M}(\hat{u}_0({\bf z})-\E[u_0])\sum_{k=1}^N\sqrt{\lambda_k}
\delta\hat{u}[(\psi_k\circ{\bf V})({\bf z})({\bf V}'^\intercal{\bf V}')^{-1}({\bf z})\det{\bf V}'({\bf z})]y_k\rho_X({\bf y})\rho_Y({\bf z})\d{\bf z}\d{\bf y}\\
&\qquad = 
\sum_{k=1}^N\sqrt{\lambda_k}\int_{\Gamma^M}(\hat{u}_0({\bf z})-\E[u_0])
\delta\hat{u}[(\psi_k\circ{\bf V})({\bf z})({\bf V}'^\intercal{\bf V}')^{-1}({\bf z})\det{\bf V}'({\bf z})]\rho_Y({\bf z})\d{\bf z}\int_{\Gamma^N}y_k\rho_X({\bf y})\d{\bf y}\\
&\qquad=0,
\end{align*}
where we again exploit the centeredness of the coordinates \(y_k\). This completes the proof.
\end{proof}

\begin{remark}
As the proof indicates, the next order correction term for the variance would be \(\E[\delta\hat{u}^2]\),
which would then lead to the expansion
\[
\V[\hat{u}_\varepsilon]=\V[\hat{u}_0] + \varepsilon^2\E[\delta\hat{u}^2]+\mathcal{O}(\varepsilon^3). 
\]
Here, in contrast to e.g.\ \cite{HPS13a}, the term \(\V[\hat{u}_0]\) does not vanish, since \(\hat{u}_0\)
is still a non-constant random field due to its dependency on the random domain.
\end{remark}

\begin{remark}
If the diffusion coefficient is already given in material coordinates, the summands in expression \eqref{eq:transportedCoefficient} simplify towards
\[
{\bf A}_s({\bf X},{\bf z})\isdef a_s({\bf X})({\bf V}'^\intercal{\bf V}')^{-1}({\bf X},{\bf z})\det{\bf V}'({\bf X},{\bf z})
\]
and
\[
{\bf A}_r({\bf X},{\bf y},{\bf z})\isdef a_r({\bf X},{\bf y})({\bf V}'^\intercal{\bf V}')^{-1}({\bf X},{\bf z})\det{\bf V}'({\bf X},{\bf z}).
\]
In this case, it is also easy to see that \(\E[{\bf A}_r]=0\).
Consequently, Theorem~\ref{thm:A} remains valid in this case. 
Moreover, it is straightforward to apply the previous derivation to diffusion problems with anisotropic
coefficients, i.e.\ \(a({\bf x},\omega)\) is a matrix-valued function, cf.\ \cite{HPSc17}.
\end{remark}

Theorem~\ref{thm:A} tells us that, given the boundedness in the rough part \(a_r\), we obtain a quadratic approximation of the solution's expectation
and variance in terms of the perturbation's amplitude. A similar result holds for output functionals of the solution
\begin{corollary}Let \(F\colon H^1_0(D_\refd)\to\mathbb{R}\) be a continuous and linear functional. Then, for $\varepsilon > 0$ 
sufficiently small, there holds
\[
  \E[F(\hat{u}_\varepsilon)] = \E[F(\hat{u}_0)]+\mathcal{O}(\varepsilon^2)\quad\text{in $H^1_0(D_\refd)$}.
\]
Herein, $\hat{u}_0\in H^1_0(D_\refd)$ 
satisfies the diffusion problem \eqref{eq:uZeroHat}.
Moreover, there holds
\[
  \V[F(\hat{u}_\varepsilon)] = \V[F(\hat{u}_0)]+\mathcal{O}(\varepsilon^2)\quad\text{in $W^{1,1}_0(D_\refd)$}.
\]
\end{corollary}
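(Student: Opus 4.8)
The plan is to reduce both assertions to Theorem~\ref{thm:A} by exploiting that $F$, being bounded and linear, commutes with the Bochner integral and with the first-order Taylor expansion \eqref{eq:Taylor-A}. For the expectation this is immediate: commuting $F$ past the expectation gives $\E[F(\hat{u}_\varepsilon)]=F(\E[\hat{u}_\varepsilon])$, and inserting the expansion $\E[\hat{u}_\varepsilon]=\E[\hat{u}_0]+\mathcal{O}(\varepsilon^2)$ from Theorem~\ref{thm:A} together with the bound $|F(w)|\le\|F\|\,\|w\|_{H^1_0(D_\refd)}$ to absorb the remainder yields $\E[F(\hat{u}_\varepsilon)]=\E[F(\hat{u}_0)]+\mathcal{O}(\varepsilon^2)$.

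The variance is the genuinely new part, since the variance operator is quadratic and $F$ cannot simply be pulled through it. Here I would instead replay the variance computation of Theorem~\ref{thm:A} at the level of the scalar random variable $F(\hat{u}_\varepsilon)$. Applying $F$ to \eqref{eq:Taylor-A} gives $F(\hat{u}_\varepsilon)=F(\hat{u}_0)+\varepsilon F(\delta\hat{u})+\mathcal{O}(\varepsilon^2)$, the remainder again being controlled by continuity of $F$. Since $\E[F(\delta\hat{u})]=F(\E[\delta\hat{u}])=0$ by the first part, subtracting the mean and squaring produces the three familiar contributions: the leading term $\V[F(\hat{u}_0)]$, an order-$\varepsilon$ cross term $2\varepsilon\,\E\big[(F(\hat{u}_0)-\E[F(\hat{u}_0)])\,F(\delta\hat{u})\big]$, and an order-$\varepsilon^2$ remainder.

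It remains to show that the cross term vanishes, which is where the structural assumptions re-enter exactly as in Theorem~\ref{thm:A}. Using the linearity of $F$ and the expansion \eqref{eq:derExpansion}, I would write $F(\delta\hat{u})=\sum_{k=1}^N\sqrt{\lambda_k}\,F\big(\delta\hat{u}[(\psi_k\circ{\bf V})({\bf z})({\bf V}'^\intercal{\bf V}')^{-1}({\bf z})\det{\bf V}'({\bf z})]\big)\,y_k$, so that each scalar coefficient depends only on ${\bf z}$ and the $y_k$-dependence factors out. As $F(\hat{u}_0)$ also depends only on ${\bf z}$, the cross term splits into a ${\bf z}$-integral times $\int_{\Gamma^N}y_k\rho_X({\bf y})\d{\bf y}=0$ and therefore vanishes by the centeredness \eqref{eq:centeredness}; inserting $F$ inside the ${\bf z}$-integral is legitimate because $F$ commutes with integration over ${\bf z}$.

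The main obstacle, insofar as there is one, is precisely this variance case, because $F$ does not commute with the quadratic variance functional and the cross term must be handled by hand; the expectation follows from Theorem~\ref{thm:A} with no further work. One should finally verify that the order-$\varepsilon^2$ remainder stays bounded: applying the continuous functional $F$ to the $W^{1,1}_0(D_\refd)$-bounded remainder of Theorem~\ref{thm:A} preserves the $\mathcal{O}(\varepsilon^2)$ estimate, which is the only point where \cite{HPS16e} is invoked.
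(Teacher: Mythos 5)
Your proposal is correct and follows exactly the route the paper intends: the paper's proof consists of the single remark that the argument of Theorem~\ref{thm:A} carries over by the continuity and linearity of $F$, which is precisely what you carry out — pulling $F$ through the Taylor expansion and the Bochner integrals for the expectation, and rerunning the variance computation at the scalar level with the cross term killed by \eqref{eq:derExpansion} and the centeredness \eqref{eq:centeredness}. Your explicit treatment of the quadratic variance functional, where $F$ cannot simply be commuted through, is the right elaboration of the paper's terse argument.
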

\begin{proof}
By exploiting the continuity and the linearity of \(F\), the proof can be conducted similarly to the proof of Theorem~\ref{thm:A}.
\end{proof}

\section{Numerical realization of the domain mapping method}\label{sec:numericalRealization}
\begin{figure}[htb]
\begin{center}
\scalebox{0.75}{
\begin{tikzpicture}[
    scale=.4,
    axis/.style={thick, ->, >=stealth'},
    important line/.style={thick},
    every node/.style={color=black}
    ]
\draw (-.5,-1)node{{\large $(0,0)$}};
\draw (8,-1)node{{\large $(1,0)$}};
\draw (-.5,9)node{{\large $(0,1)$}};
\draw (0,0)--(8,0);
\draw (0,0)--(0,8);
\draw (8,0)--(0,8);
\draw (4,0)--(0,4);
\draw (6,0)--(0,6);
\draw (2,0)--(0,2);
\draw (2,0)--(2,6);
\draw (0,2)--(6,2);
\draw (0,4)--(4,4);
\draw (4,0)--(4,4);
\draw (6,0)--(6,2);
\draw (0,6)--(2,6);
\draw (2,2)node(N1){};
\draw (14,6)node(N2){};
\draw (16.5,3)node{\includegraphics[scale=.2, trim= 300 140 300 140, clip]{./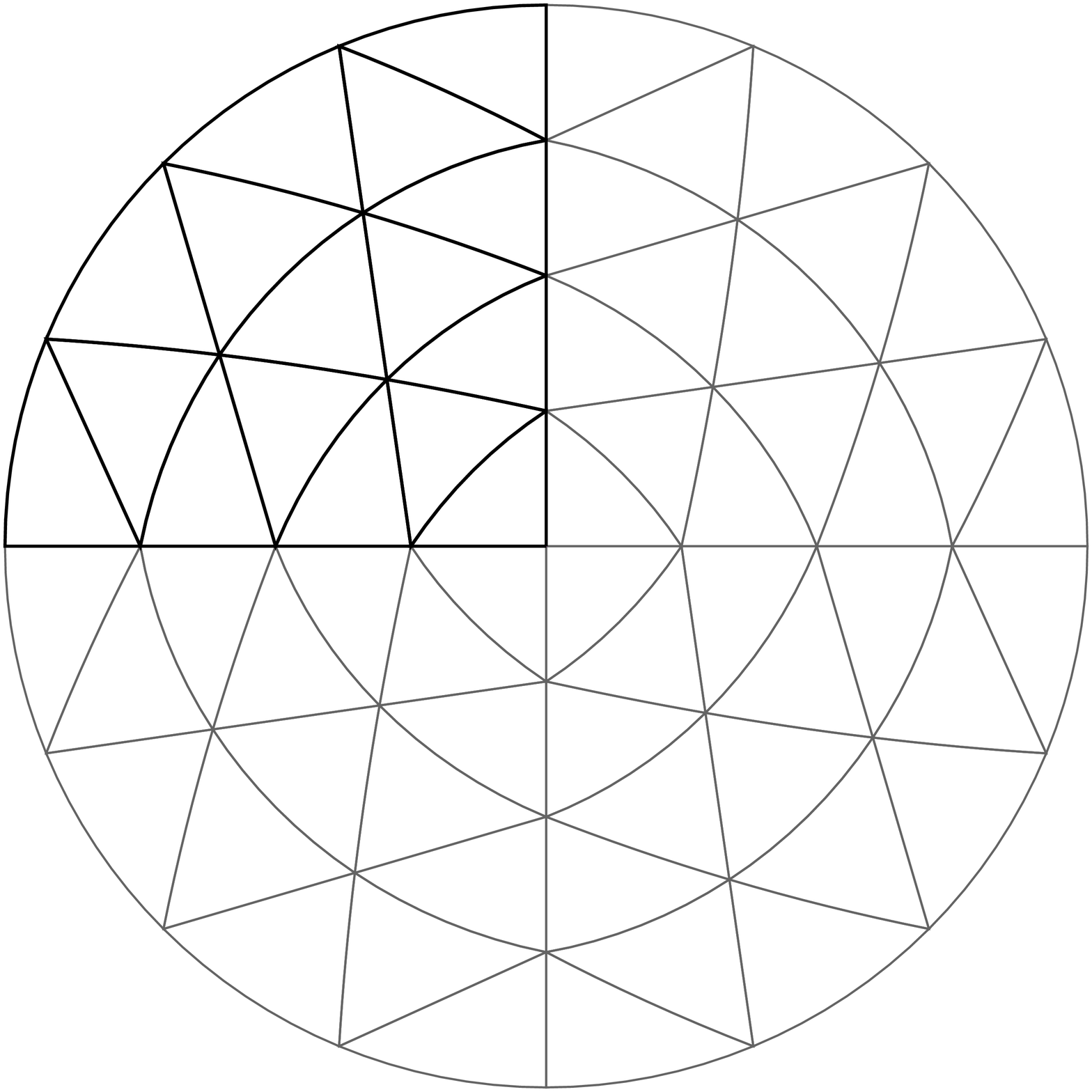}};
\draw (32,3)node{\includegraphics[scale=.2, trim= 300 140 300 140, clip]{./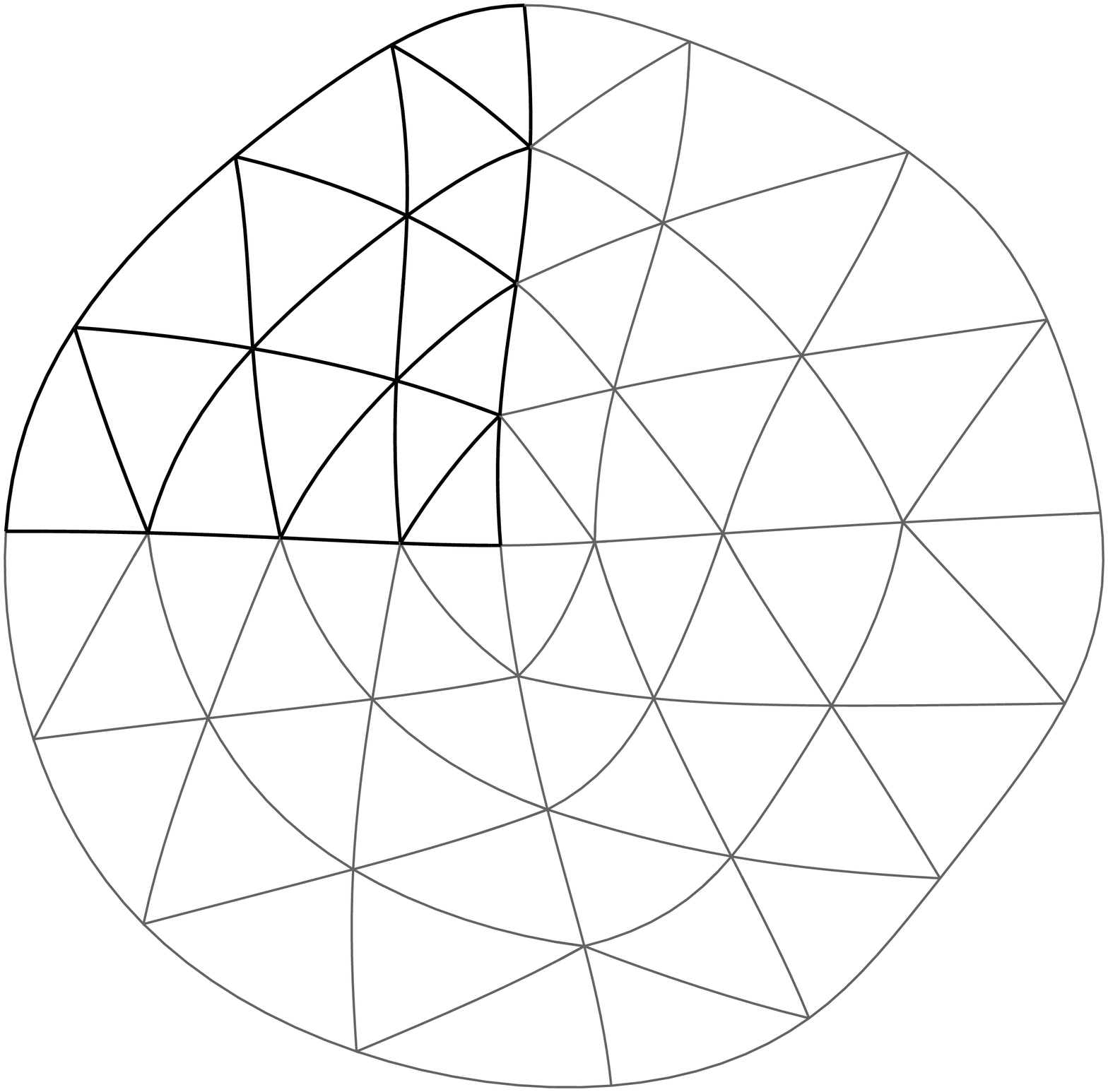}};
\draw (7.9,6.5)node(N3){\large${\boldsymbol\kappa}_j$};
\path[->,line width=1pt]  (N1) edge [bend left] (N2);
\draw (14.5,6)node(N4){};
\draw (31.1,4.8)node(N5){};
\path[->,line width=1pt]  (N4) edge [bend left] (N5);
\draw (23.2,8.5)node(N3){\large${\bf V}({\bf X},{\bf y}_i)$};
\end{tikzpicture}}
\caption{\label{fig:parametrization} Construction of parametric finite elements and the action of the random vector field.}
\end{center}
\end{figure}
In this section, we describe the numerical realization of the domain mapping method
and, in particular, the discretization of the random vector field.
In addition, we give a brief description of how the diffusion coefficient, which is 
given in spatial coordinates, can be represented such that it is feasible within the
domain mapping method.
For the sake of completeness, we start by introducing the parametric 
finite element method which is used in the numerical examples. Nevertheless, 
we emphasize that the construction presented in Section~\ref{subsec:randField} is
also viable for other finite element discretizations based on nodal basis functions. 
\subsection{Finite element approximation}
For the spatial discretization, we employ linear 
(iso-) parametric finite elements, see \cite{B,Brenner,lenoir}.
To that end, the domain $D_\refd$ shall be given by a 
collection of simplicial smooth \emph{patches}. More 
precisely, let $\triangle$ denote the reference simplex 
in $\mathbb{R}^d$. Then, the domain 
$D_\refd$ shall be partitioned into $K$ patches
\begin{equation}\label{eq:parametrization}
  \overline{D_\refd} = \bigcup_{j=1}^K \tau_{0,j},
  	\quad \tau_{0,j} = \boldsymbol\kappa_j(\triangle), 
	\quad j = 1,2,\ldots,K.
\end{equation}

The intersection \(\tau_{0,j}\cap \tau_{0,j'}\), \(j\neq j'\), of
any two patches \(\tau_{0,j}\) and \(\tau_{0,j'}\) is supposed 
to be either empty or a common lower dimensional face.

Based on this construction, it is straightforward to introduce a hierarchical
mesh on \(D_\refd\), which is feasible for a geometric multigrid solver: 
A mesh of level \(\ell\) on \(D_\refd\) is obtained by 
regular subdivisions of depth \(\ell\) of the reference simplex 
into \(2^{\ell d}\) sub-simplices. Then, by mapping this mesh via the
parametrizations \(\boldsymbol\kappa_j\), \(j=1,\ldots,K\), we obtain
 \(K2^{\ell d}\) elements $\{\tau_{\ell,i}\}_{i}$ for \(D_\refd\),
 see the first
mapping in Figure~\ref{fig:parametrization} for a visualization of this situation.
 
 In order to guarantee that the triangulation
\(\mathcal{T}_\ell\isdef\{\tau_{\ell,i}\}_i\) on level \(\ell\) results in
a regular mesh for \(D_\refd\), the parametrizations 
\(\{ \boldsymbol\kappa_j\}_j\) are supposed to be \(C^0\) 
compatible in the following sense: there exists a bijective, 
affine mapping \(\boldsymbol\Xi:\triangle\to\triangle\)
such that for all \({\bf X} = \boldsymbol\kappa_i({\bf s})\)
on a common interface of \(\tau_{0,j}\) and \(\tau_{0,j'}\) 
it holds that \(\boldsymbol\kappa_j({\bf s})
= (\boldsymbol\kappa_{j'}\circ\boldsymbol\Xi)({\bf s})\).
Thus, the diffeomorphisms \(\boldsymbol\kappa_j\) and
\(\boldsymbol\kappa_{j'}\) coincide at the common interface except
for orientation. 

Finally, we define the piecewise finite element ansatz functions 
by lifting the Lagrangian finite elements from \(\triangle\) 
to the domain \(D_\refd\) by using the parametrizations \(\boldsymbol\kappa_j\).
To that end, let \(\Phi_\ell=\{\varphi_{\ell,i}:i\in\mathcal{I}_\ell\}\), 
where \(\mathcal{I}_\ell\) is a suitable index set,
denote the Lagrangian piecewise linear basis functions
on the \(\ell\)-th subdivision \(\triangle_\ell\)
of the reference simplex.
Then, we obtain the finite element space
\[
  {V}_{\triangle,\ell}= \operatorname{span}\{\varphi_{\ell,j}:j\in\mathcal{I}_\ell\}
	= \{ u\in C(\triangle): u|_\tau\in\Pi_1\ \text{for all}\ \tau\in\triangle_\ell\}
\]
with $\dim{V}_{\triangle,\ell}\eqsim 2^{\ell d}$ and \(\Pi_1\) 
denoting the space of linear polynomials.
Continuous basis functions whose support overlaps 
with several patches are obtained by gluing across 
patch boundaries, using the $C^0$ inter-patch compatibility. 
This yields a nested sequence of finite element 
spaces
\[
V_{\refd,\ell}\isdef\{v\in C(D_\refd): v|_{{\boldsymbol\kappa}_j(\triangle)}=\varphi\circ{\boldsymbol\kappa}^{-1}_j,
\varphi\in{V}_{\triangle,\ell},\ j=1,\ldots,K\}\subset H^1(D_\refd)
\]
with \(\dim V_{\refd,\ell}\eqsim K2^{\ell d}\). 
It is well known that 
the spaces \(V_{\refd,\ell}\) satisfy the following approximation result. It holds
\begin{equation}	\label{eq:approx}
 \inf_{v_\ell\in V_{\refd,\ell}}\|u-v_\ell\|_{H^1(D_\refd)}
 	\leq c h_\ell\|u\|_{H^2(D_\refd)}
		\quad u\in H^2(D_\refd),
\end{equation}
for some constant \(c>0\), see e.g.\ \cite{B,Brenner}. This result remains valid if we replace the parametrizations
\(\{\boldsymbol\kappa_j\}_j\) by their piecewise affine approximation, i.e. when we replace
the curved edges by planar ones,
 see again \cite{B,Brenner}. In this view, one might also directly start from a polygonal approximation of \(D_\refd\).
 \subsection{Discretization of the random vector field}\label{subsec:randField}
 Next, we explain how the random vector field can be approximated numerically. For the sake of a less cumbersome
 notation, we present the construction only for \(d=2\) and emphasize that the construction for \(d=3\) can
 be performed in complete analogy. 
 
 Let the random vector field be given by its expectation \(\E[{\bf V}]({\bf X})\) 
 and its covariance function
 \(\Cov[{\bf V}]({\bf X},{\bf X}')=[\Cov_{i,j}[{\bf V}]({\bf X},{\bf X}')]_{i,j=1,2}\).
Moreover, let \(\{{\bf X}_i\}_{i=1}^n\subset D_\refd\) be the centers of the nodal linear finite element basis
\(\{\varphi_1,\ldots,\varphi_n\}\), i.e.\ \(\varphi_i({\bf X}_j)=\delta_{i,j}\), where \(n=\operatorname{dim}(V_{\refd,\ell})\).
Then, we can represent the expectation by its finite element interpolant
\[
\E[{\bf V}]({\bf X})\approx\sum_{i=1}^n \E[{\bf V}]({\bf X}_i)\varphi_i({\bf X})
\]
and in complete analogy
\[
\Cov[{\bf V}]({\bf X},{\bf X}')\approx\sum_{i,j=1}^n\Cov[{\bf V}]({\bf X}_i,{\bf X}_j)\varphi_i({\bf X})\varphi_j({\bf X}').
\]
Now, in order to determine the Karhunen-Lo\`eve expansion of \({\bf V}\), 
we have to solve the operator eigenvalue problem
\[
\int_{D_\refd}\Cov[{\bf V}]({\bf X},{\bf X}'){\boldsymbol\varphi}({\bf X}')\d{\bf X}'=\mu{\boldsymbol\varphi}({\bf X}).
\]
Thus, by replacing \(\Cov[{\bf V}]\) by its finite element interpolant and testing with respect to the basis functions
\(\big\{[\varphi_i,0]^\intercal_{i},[0,\varphi_i]^\intercal_{i}\big\}_{i=1}^n\), 
we end up with the generalized algebraic eigenvalue problem
\begin{equation}\label{eq:algEig}
\begin{bmatrix}{\bf M} & \\ & {\bf M}\end{bmatrix}
{\bf C}
\begin{bmatrix}{\bf M} & \\ & {\bf M}\end{bmatrix}
{\bf v}=\mu\begin{bmatrix}{\bf M} & \\ & {\bf M}\end{bmatrix}{\bf v},\quad{\bf v}\in\mathbb{R}^{2n}.
\end{equation}
Herein,
\[
{\bf C}\isdef\begin{bmatrix}
 \big[\Cov_{1,1}[{\bf V}]({\bf X}_i,{\bf X}_j)\big]_{i,j=1}^n &\big[\Cov_{1,2}[{\bf V}]({\bf X}_i,{\bf X}_j)\big]_{i,j=1}^n\\
\big[\Cov_{2,1}[{\bf V}]({\bf X}_i,{\bf X}_j)\big]_{i,j=1}^n &\big[\Cov_{2,2}[{\bf V}]({\bf X}_i,{\bf X}_j)\big]_{i,j=1}^n
\end{bmatrix}\in\mathbb{R}^{2n\times 2n}
\]
is the covariance function evaluated in all combinations of grid points and
\[
{\bf M}\isdef[m_{i,j}]_{i,j=1}^n\in\mathbb{R}^{n\times n}\quad\text{with}\quad m_{i,j}=
\int_{D_\refd}\varphi_i\varphi_j\d{\bf X}
\]
denotes the finite element mass matrix.

The algebraic eigenvalue problem \eqref{eq:algEig} can now be efficiently solved by means of the pivoted Cholesky decomposition 
as follows.
Let \({\bf C}\approx{\bf L}{\bf L}^\intercal\) with \({\bf L}\in\mathbb{R}^{2n\times M}\) (\(M\ll n\))
be the low rank approximation provided by the 
pivoted Cholesky decomposition of \({\bf C}\) as described in, e.g.\ \cite{HPS14}.
Then, we approximate the eigenvalue problem \eqref{eq:algEig} by
\begin{equation}\label{eq:approxEig}
\begin{bmatrix}{\bf M} & \\ & {\bf M}\end{bmatrix}
{\bf LL}^\intercal
\begin{bmatrix}{\bf M} & \\ & {\bf M}\end{bmatrix}
{\bf v}=\mu\begin{bmatrix}{\bf M} & \\ & {\bf M}\end{bmatrix}{\bf v},\quad{\bf v}\in\mathbb{R}^{2n}.
\end{equation}
This eigenvalue problem is equivalent to the usually much smaller eigenvalue problem
\begin{equation}\label{eq:redEig}
{\bf L}^\intercal\begin{bmatrix}{\bf M} & \\ & {\bf M}\end{bmatrix}{\bf L}\tilde{\bf v}=\mu\tilde{\bf v},
\quad\tilde{\bf v}\in\mathbb{R}^M.
\end{equation}
In particular, if \(\tilde{\bf v}_i\) is an eigenvector of \eqref{eq:redEig} with eigenvalue \(\mu_i\), then \({\bf v}_i\isdef{\bf L}\tilde{\bf v}_i\) is an eigenvector of \eqref{eq:approxEig} with eigenvalue \(\mu_i\). Moreover, there holds
\[
{\bf v}_i^\intercal\begin{bmatrix}{\bf M} & \\ & {\bf M}\end{bmatrix}{\bf v}_j=\mu_i\delta_{i,j}.
\]
\begin{remark}
The cost for computing the pivoted Cholesky decomposition is \(\mathcal{O}(2nM^2)\) and, since all entries of
\({\bf C}\) can be computed on the fly without the need of storing the entire matrix \({\bf C}\), 
the storage cost is \(\mathcal{O}(2nM)\). Moreover,
the small eigenvalue problem \eqref{eq:redEig} can be solved with cost \(\mathcal{O}(M^3)\). Thus, since usually \(M\ll n\) 
the overall cost for computing the Karhunen-Lo\`eve expansion of \({\bf V}\) by the suggested approach 
is also \(\mathcal{O}(2nM^2)\) in total.
\end{remark}

By the presented construction, we obtain a piecewise affine approximation of the random vector field \({\bf V}\), see
the second mapping in Figure~\ref{fig:parametrization}.
Note that the uniformity condition \eqref{eq:unif} guarantees that the functional determinant \(\det{\bf V}'\)
has a constant sign, see e.g.\ \cite{HPS16}. Thus, without loss of generality, we may assume \(\det{\bf V}'>0\). 
For a sufficiently small mesh width \(h>0\), this property carries over to its piecewise affine approximation. 
Therefore, each random realization of the piecewise affine approximation maps a mesh onto a mesh 
by simply moving the mesh points and keeping the topology,
i.e. the sets of point indices that make up an element, fixed.  

As a consequence, the realizations of the solution to \eqref{eq:varForm} can either be directly 
computed on the reference domain
\(D_\refd\) or on its image \({\bf V}(D_\refd,\omega)\). 
In particular, there holds a similar approximation result to \eqref{eq:approx} for the mapped finite element space on 
\({\bf V}(D_\refd,\omega)\), see e.g. \cite{B,HPS16}.
Finally, we remark that in case that the solution \(u\) has been computed on \({\bf V}(D_\refd,\omega)\), 
the corresponding solution \(\hat{u}\) on \(D_\refd\) can be easily retrieved
by assigning the computed node values of the solution to the respective mesh points in \(D_\refd\), 
cf.\ \eqref{eq:solutionIdentity}.

\subsection{Discretization of the diffusion coefficient}
Since the diffusion coefficient is represented in spatial coordinates, 
we have to provide an efficient means to evaluate it for each particular 
realization of the domain \({\bf V}(D_\refd,\omega)\).
To that end, we assume that the hold-all \(\mathcal{D}\) 
can be subdivided by a Cartesian grid, for example,
the hold all can be chosen as a rectangle for \(d=2\) 
and as a cuboid for \(d=3\). Then, after introducing
a uniform grid for \(\mathcal{D}\), 
we can perform the computation of the Karhunen-Lo\`eve expansion exactly
as for the random vector field.

Now, in order to evaluate \(a({\bf x},\omega)\) at a certain point
\({\bf V}(D_\refd,\omega)\), we only need to retrieve the containing grid
cell in \(\mathcal{D}\) which is, due to the simple structure of the mesh
on \(\mathcal{D}\) an \(\mathcal{O}(1)\) operation.

\section{Numerical results}\label{sec:numres}
In this section, we present a numerical example to validate the
presented approach. For the sake of computation times, we consider
only an example in two spatial dimensions.
The reference domain is given by the unit disc, i.e.\
\[
D_\refd\isdef\{{\bf X}\in\mathbb{R}^2: \|{\bf X}\|_2 < 1\}.
\]
The random vector field is represented via its expectation
and covariance function according to
\begin{equation}\label{eq:covVField}
\E[{\bf V}]({\bf X})={\bf X},\quad \Cov[{\bf V}]({\bf X},{\bf X}')=
\frac{1}{1000}
\begin{bmatrix}
5\exp(-2\|{\bf X}-{\bf X}'\|_2^2) & \exp(-0.1\|2{\bf X}-{\bf X}'\|_2^2)\\
\exp(-0.1\|{\bf X}-2{\bf X}'\|_2^2) & 5\exp(-0.5\|{\bf X}-{\bf X}'\|_2^2)
\end{bmatrix}.
\end{equation}
Moreover, we assume that the random variables in the corresponding
Karhunen-Lo\`eve expansion are independent and uniformly distributed on
\([-\sqrt{3},\sqrt{3}]\), i.e.\ they have normalized variance.

We aim at computing the expectation and the variance of the solution to
\[
-\div\big(a_\varepsilon(\omega)\nabla u_\varepsilon(\omega)\big)=1\quad\text{in }D(\omega),
\quad u_\varepsilon(\omega)=0\quad\text{on }\partial D(\omega).
\]

\begin{figure}[htb]
\begin{center}
\includegraphics[scale=.5, trim= 100 250 100 250, clip]{./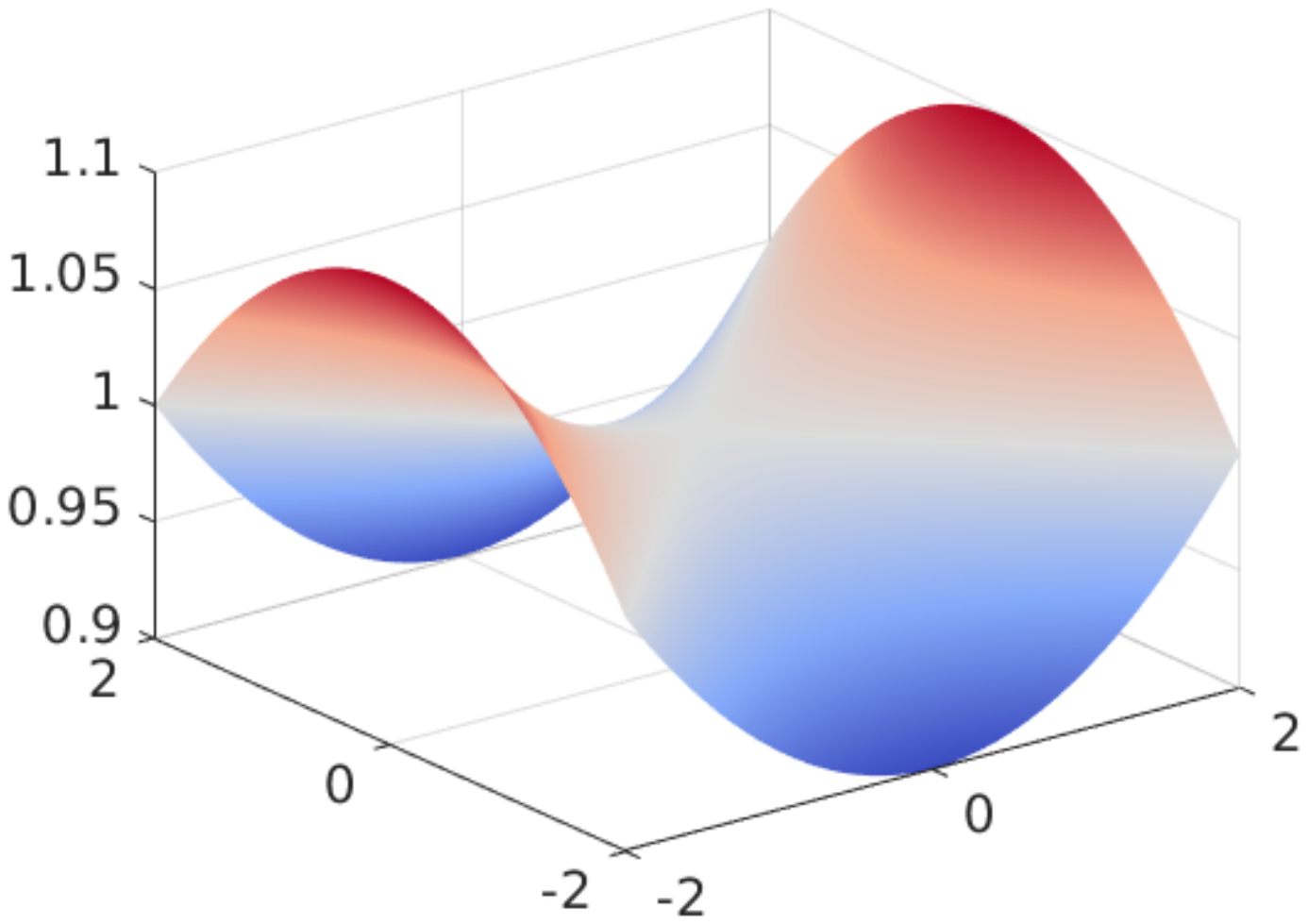}\hfill
\includegraphics[scale=.5, trim= 100 250 100 250, clip]{./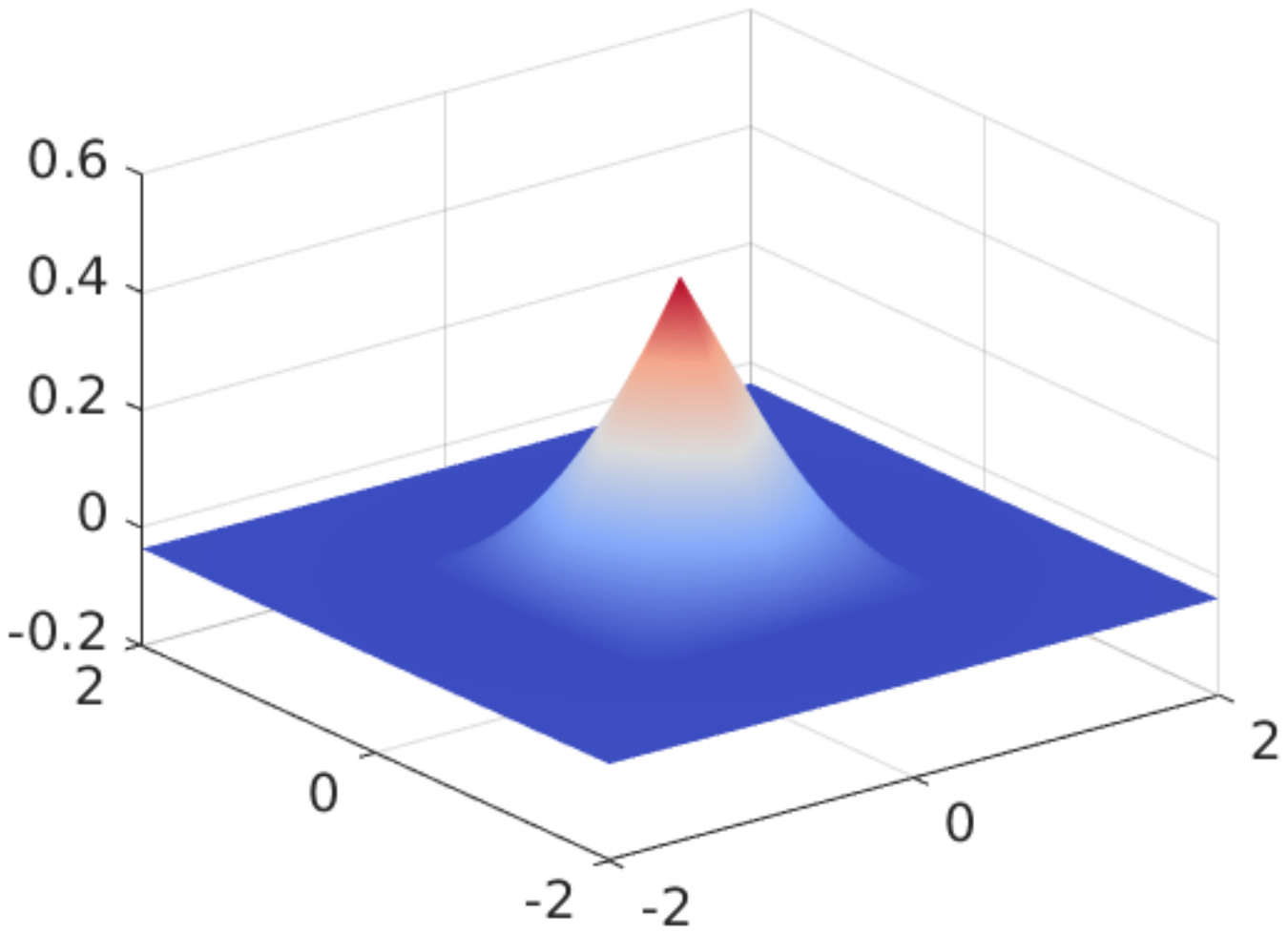}
\caption{\label{fig:meanandef}Expectation (left) and first eigenfunction (right) of the 
random diffusion coefficient.}
\end{center}
\end{figure}

Herein, the diffusion coefficient is also defined via its expectation and covariance as
\[
\E[a_\varepsilon]({\bf x}) = 1 + \frac{x_1^2 - x_2^2}{40},\quad\Cov[a_\varepsilon]({\bf x},{\bf x}')
=\frac{\varepsilon^2}{100}\bigg[2\exp\bigg(\frac{-\|{\bf x}-{\bf x}'\|_2^2}{32}\bigg)+
9g({\bf x})g({\bf x}')\bigg],\quad\varepsilon\geq0
\]
where 
\[
g({\bf x})\isdef\max\{0,1-|x_1|\}\cdot\max\{0,1-|x_2|\}
\]
denotes the tensor product hat-function. 
Again, we assume that the random variables in the corresponding
Karhunen-Lo\`eve expansion are independent and uniformly distributed on
\([-\sqrt{3},\sqrt{3}]\).

We remark that the tensor product hat function \(g({\bf x})\) is only Lipschitz continuous. 
Therefore, \(g\circ{\bf V}\) and, consequently, \(a\circ{\bf V}\) are non-smooth functions. 
See the right panel in Figure~\ref{fig:meanandef} for a visualization of
the first eigenfunction in the diffusion coefficient's Karhunen-Lo\`eve expansion. 
Consequently, Theorem~\ref{thm:smoothcase}
cannot be applied here and we resort to the perturbation approach.
To that end, we set
\[
a_\varepsilon({\bf x},\omega)=a_s({\bf x})+\varepsilon a_r({\bf x},\omega)\isdef
\E[a]({\bf x})+\varepsilon\sum_{k=1}^N\sqrt{\lambda_k}\psi_k({\bf x})X_k(\omega),
\]
where the latter refers to the truncated Karhunen-Lo\`eve expansion of \(a_\varepsilon({\bf x},\omega)\).

The computation of the finite element approximations and of the 
 Karhunen-Lo\`eve expansion for the random vector field are carried out on a mesh for \(D_\refd\)
with mesh width \(h=2^{-6}\). The Karhunen-Lo\`eve expansion is truncated 
such that the truncation error is smaller than \(10^{-2}\), see e.g.\ \cite{GP16}.
This results in \(M=52\) parameters. The Karhunen-Lo\`eve
expansion of the diffusion coefficient is computed on the hold-all domain \(\mathcal{D}=[-2,2]^2\), 
which is discretized by a quadrilateral mesh of size \(h=4\cdot10^{-3}\). 
The coefficient's Karhunen-Lo\`eve expansion is also truncated such that the error is smaller
than \(10^{-2}\). This results in \(N=10\) additional parameters. 

Hence, in terms of parameters, we are facing here a 62 dimensional quadrature problem with a non-smooth integrand. 
This means that we cannot use sophisticated sparse- or quasi-Monte Carlo quadrature methods to obtain suitable 
reference solutions. Instead, we employ the plain vanilla Monte Carlo quadrature with a huge amount, 
i.e. \(10^8\), of samples. Each reference solution
is then calculated by averaging five runs of the Monte Carlo simulation, 
resulting in \(5\cdot 10^8\) samples in total.\footnote{The computations have been carried 
out on the Euler cluster managed by the Scientific IT Services at the ETH Zurich, see
\texttt{https://sis.id.ethz.ch/hpc}, with up to 1400 cores.}

\begin{figure}[htb]
\begin{center}
\begin{tikzpicture}
\draw (0,6) node{
\includegraphics[scale=.5, trim= 100 250 100 250, clip]{./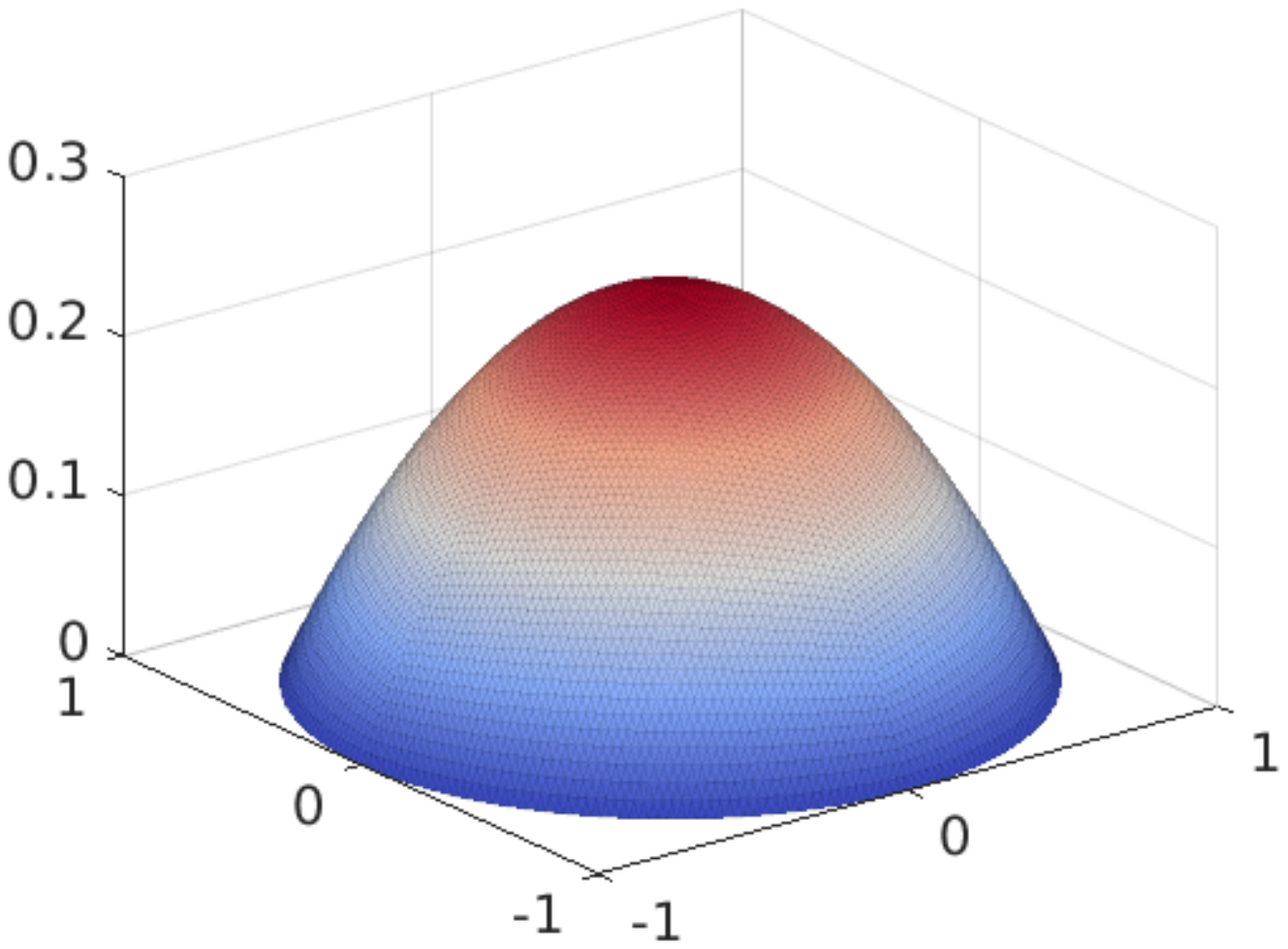}\hfill
\includegraphics[scale=.5, trim= 100 250 100 250, clip]{./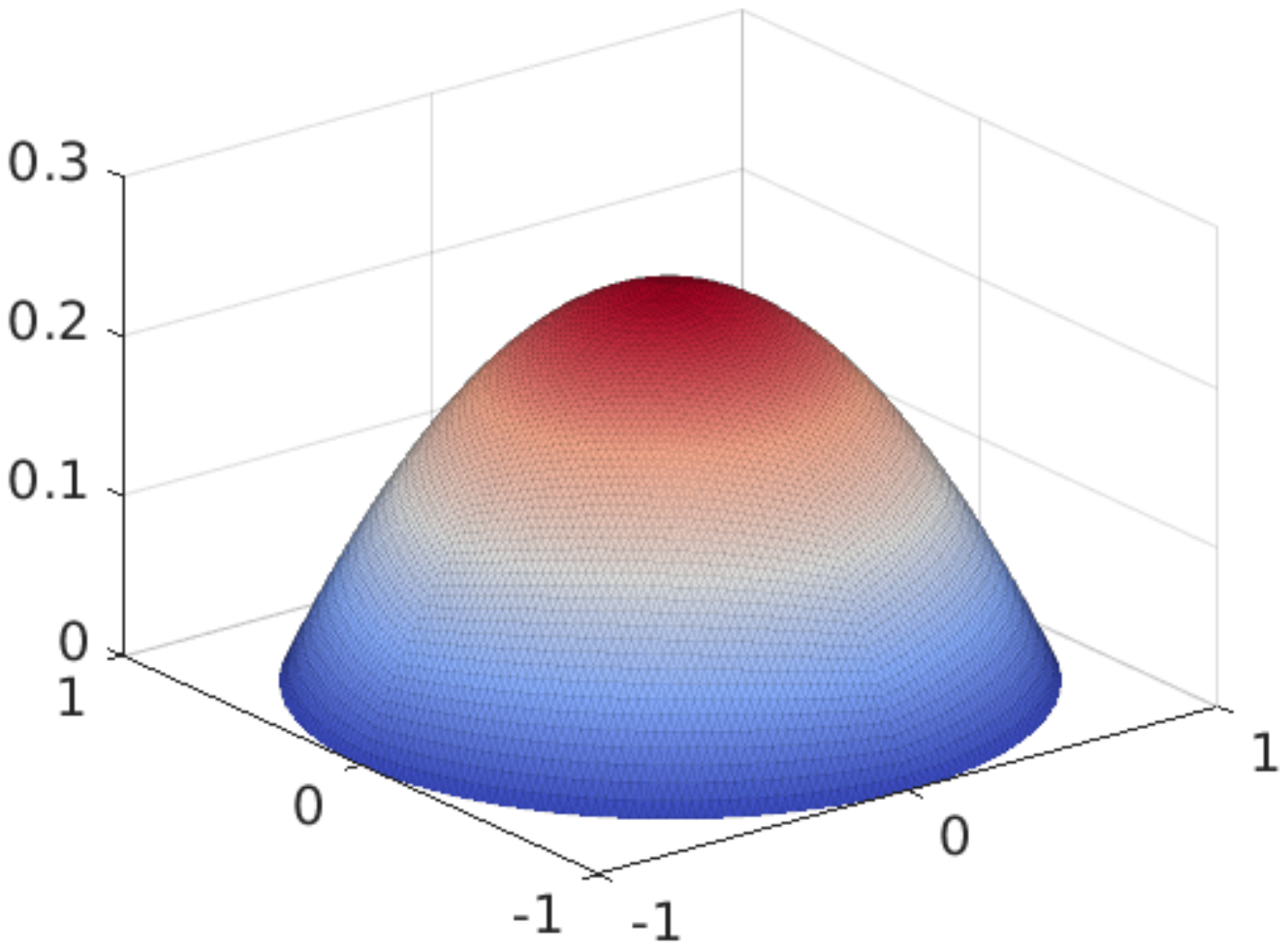}};
\draw (-3.55,3) node{\(\varepsilon=0.01\)};
\draw (3.7,3) node{\(\varepsilon=0.25\)};
\draw (0,0) node{
\includegraphics[scale=.5, trim= 100 250 100 250, clip]{./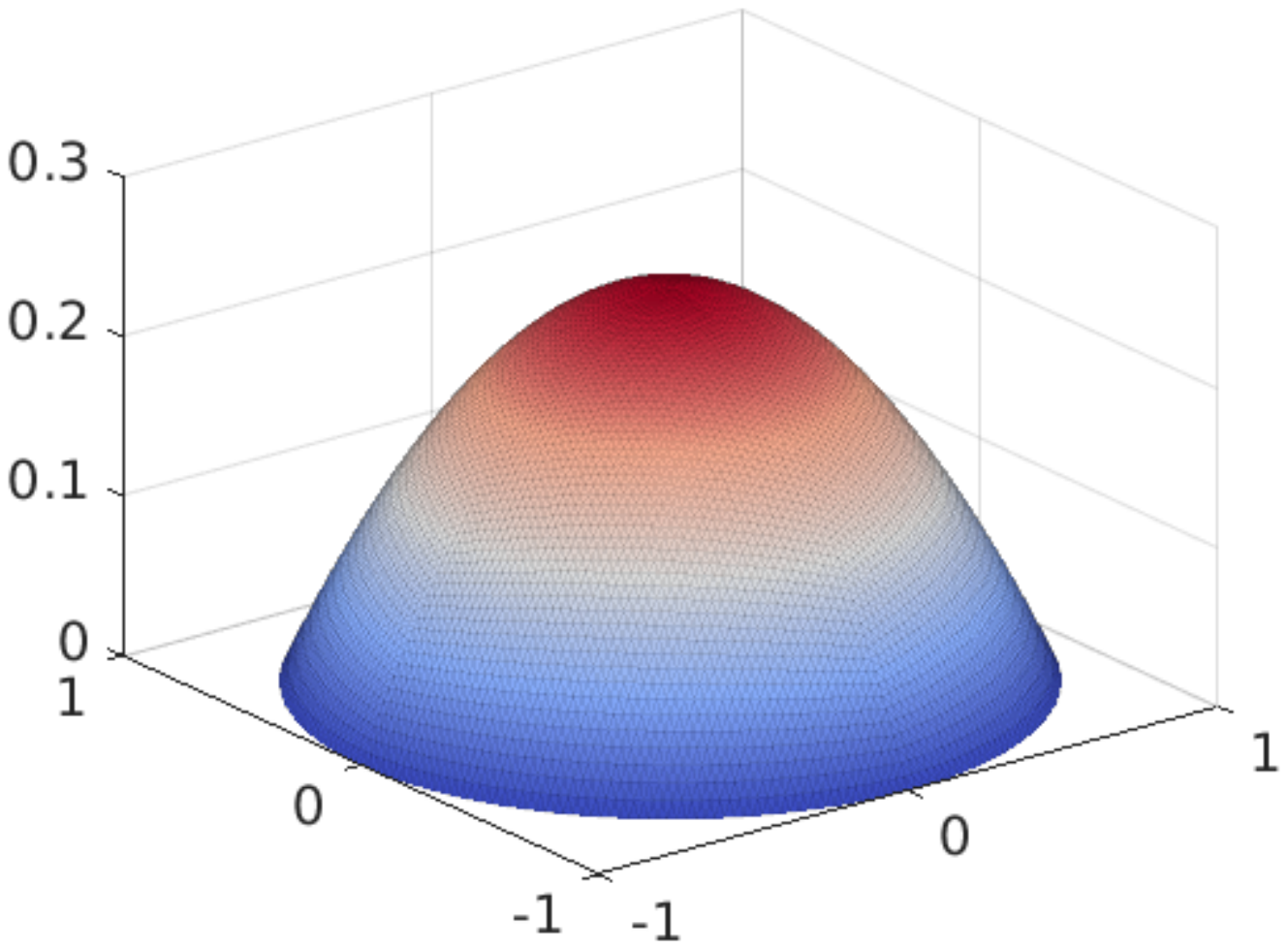}\hfill
\includegraphics[scale=.5, trim= 100 250 100 250, clip]{./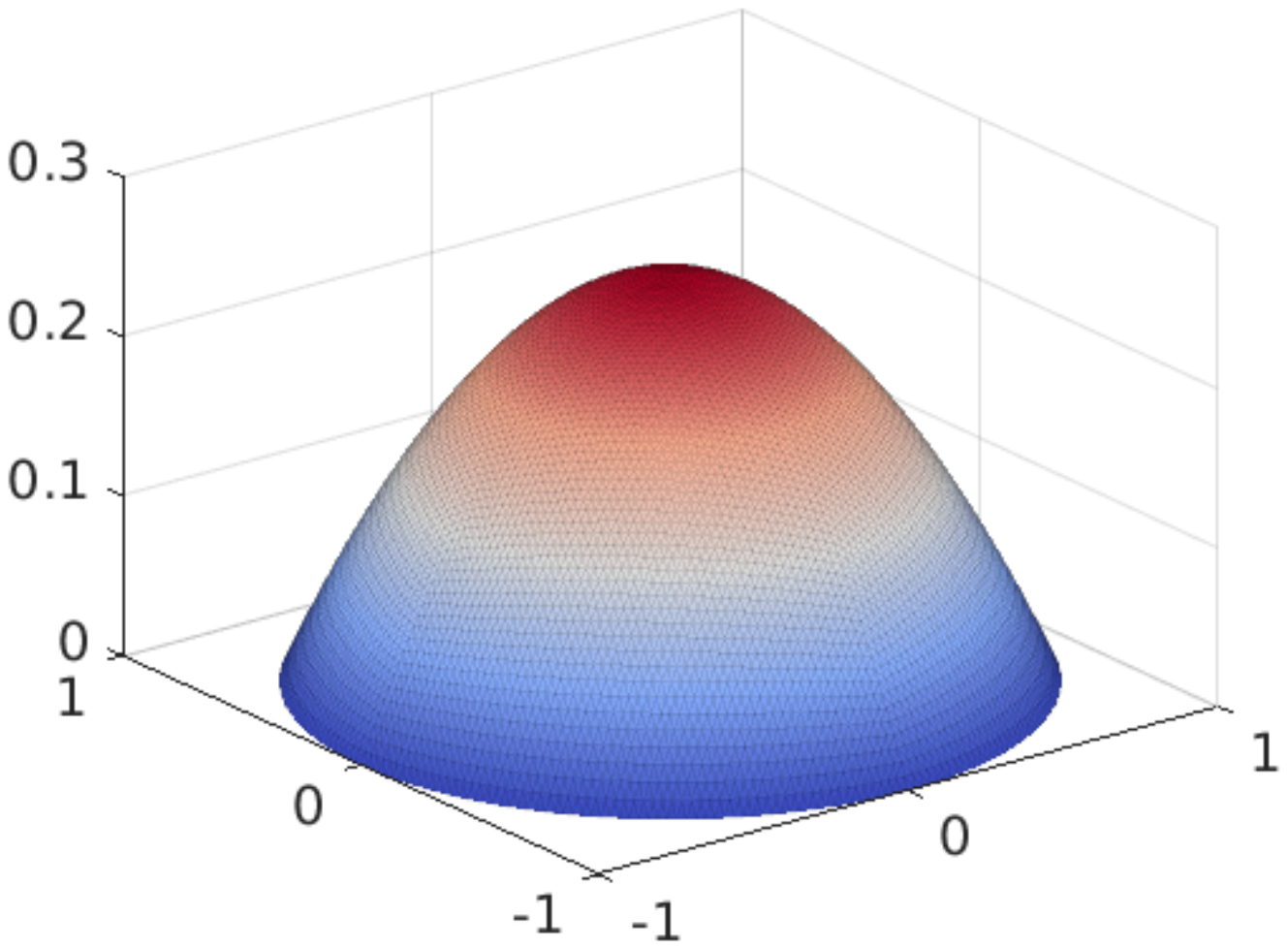}};
\draw (-3.55,-3) node{\(\varepsilon=0.5\)};
\draw (3.7,-3) node{\(\varepsilon=1\)};
\end{tikzpicture}
\caption{\label{fig:meansol}Expectation of the solution \(\hat{u}\) for \(\varepsilon=0.01,0.25,0.5,1\).}
\end{center}
\end{figure}

\begin{figure}[htb]
\begin{center}
\begin{tikzpicture}
\draw (0,6) node{
\includegraphics[scale=.5, trim= 100 250 100 250, clip]{./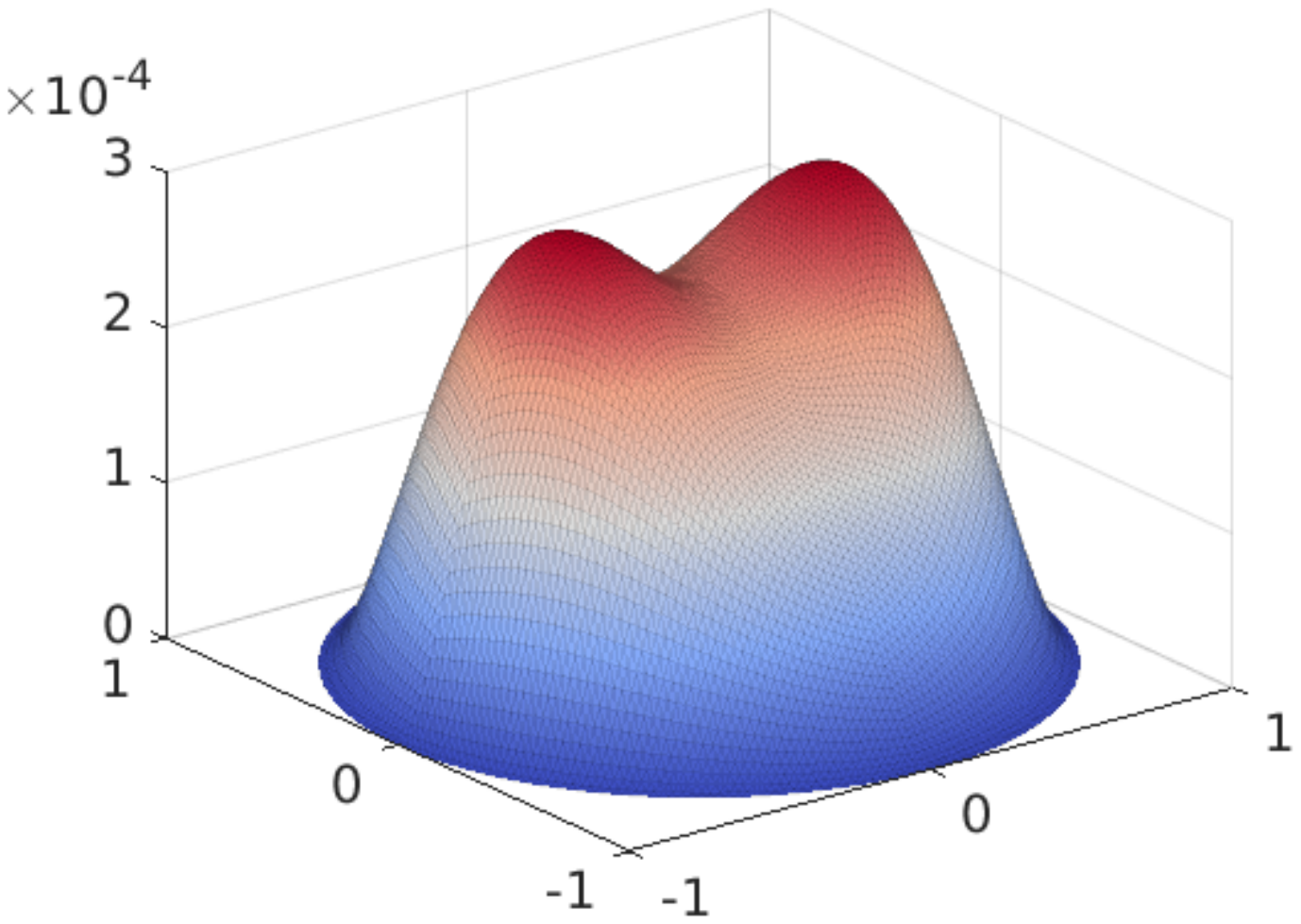}\hfill
\includegraphics[scale=.5, trim= 100 250 100 250, clip]{./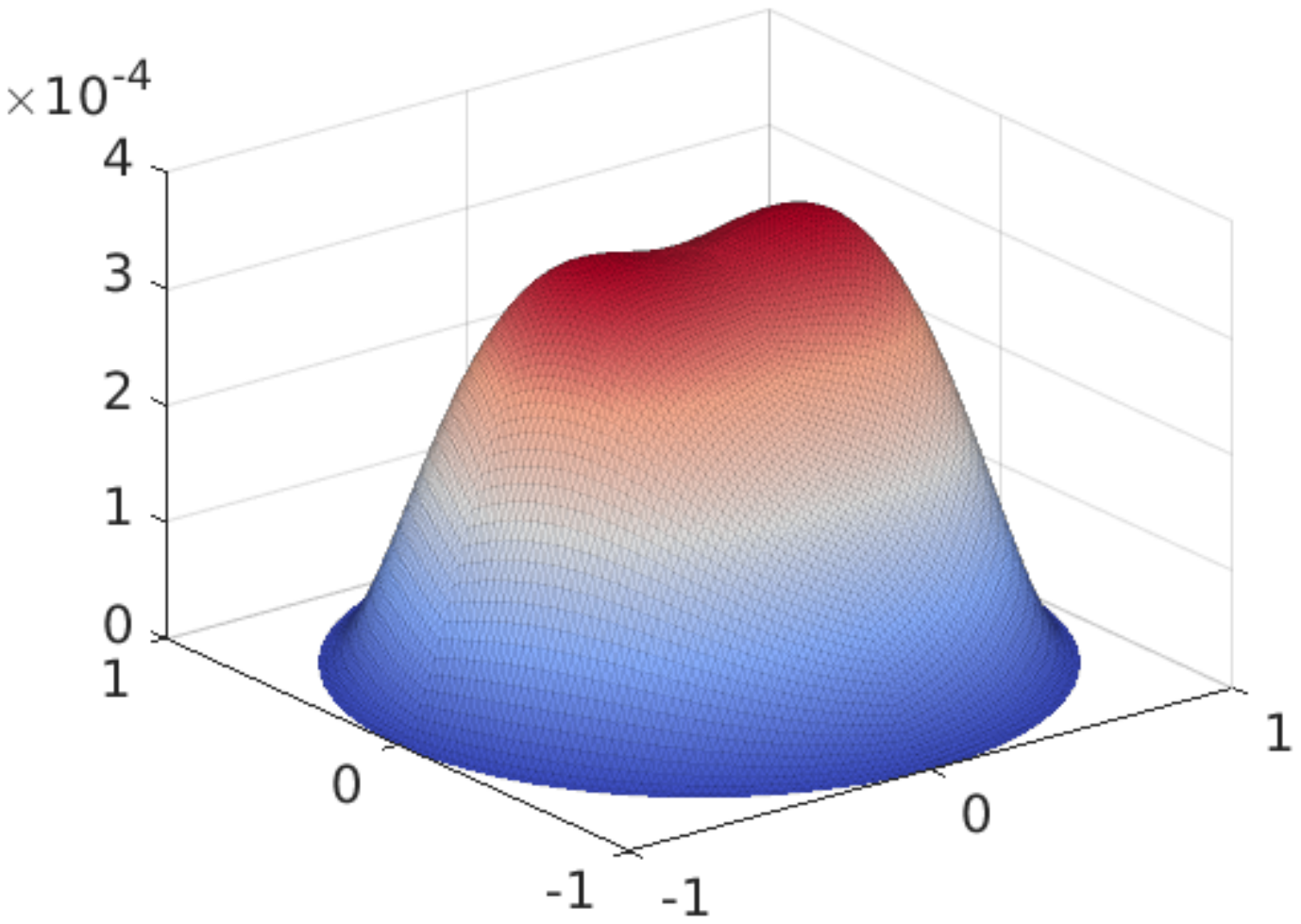}};
\draw (-3.55,3) node{\(\varepsilon=0.01\)};
\draw (3.7,3) node{\(\varepsilon=0.25\)};
\draw (0,0) node{
\includegraphics[scale=.5, trim= 100 250 100 250, clip]{./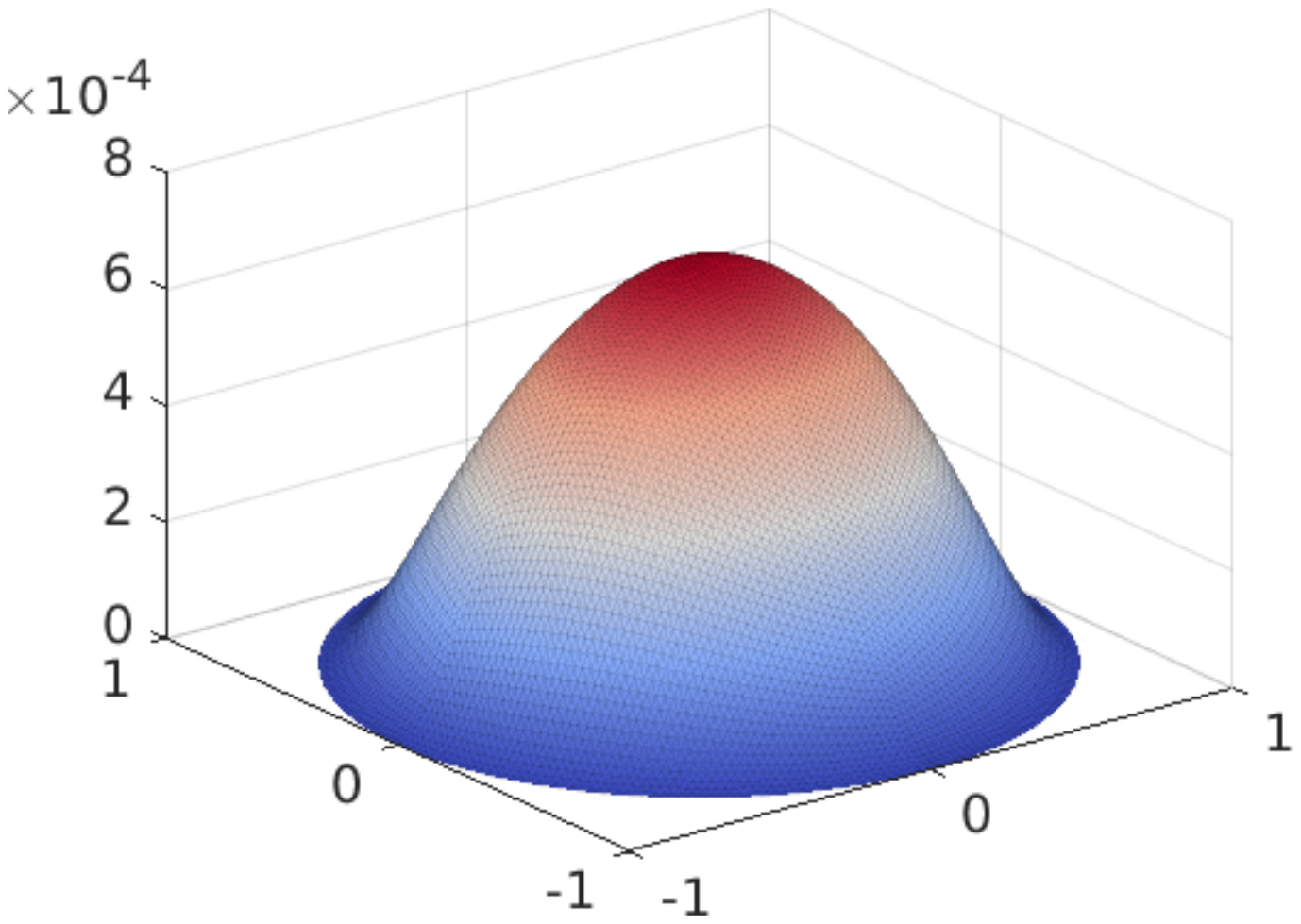}\hfill
\includegraphics[scale=.5, trim= 100 250 100 250, clip]{./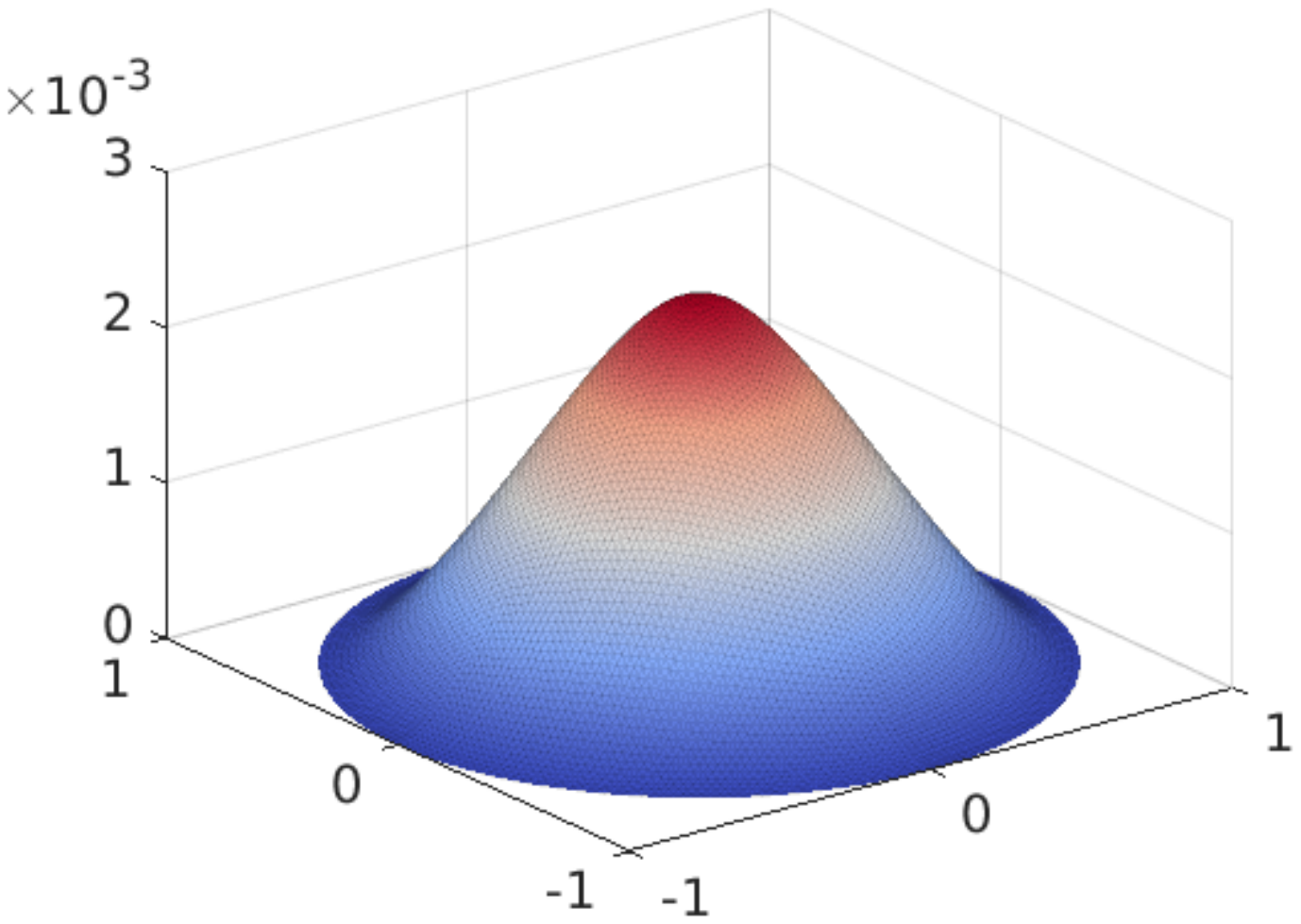}};
\draw (-3.55,-3) node{\(\varepsilon=0.5\)};
\draw (3.7,-3) node{\(\varepsilon=1\)};
\end{tikzpicture}
\caption{\label{fig:varsol}Variance of the solution \(\hat{u}\) for \(\varepsilon=0.01,0.25,0.5,1\).}
\end{center}
\end{figure}

Figure~\ref{fig:meansol} shows visualizations of the expectation for \(\varepsilon=0.01,0.25,0.5,1\).
It turns out that the expectation looks rather similar in all four cases.
This is in contrast to the variance, which is depicted for the same values of \(\varepsilon\) in Figure~\ref{fig:varsol}.
Here, for \(\varepsilon=0.01,0.25\) the shape of the variance is governed by the anisotropy that is induced
by the random vector field, see \eqref{eq:covVField}, while for \(\varepsilon=0.5,1\) the shape is governed by
the shape of the first eigenfunction of the random diffusion coefficient, see Figure~\ref{fig:meanandef}.

For, the computation of \(\E[\hat{u}_0]\) and \(\V[\hat{u}_0]\), cf.\ Theorem~\ref{thm:A}, we are in the setting
that is considered in Section~\ref{sec:analyticDiffusion}, i.e.\ all data are analytic functions.
Consequently, \(\hat{u}_0\) is also an analytic function with respect to the parameters \({\bf z}\in\Gamma^M\).
Hence, we may use the sparse, anisotropic quadrature method based on the Gauss-Legendre points, see \cite{HHPS18}, 
to compute \(\E[\hat{u}_0]\) and \(\V[\hat{u}_0]\) in an efficient manner.\footnote{The 
implementation of the sparse grid quadrature is available on \texttt{https://github.com/muchip/SPQR}.}

In Theorem~\ref{thm:A}, we have derived a point-wise error estimate, thus, we will measure here,
in order to validate this theorem numerically,
\[
\|\E[\hat{u}_\varepsilon]-\E[\hat{u}_0]\|_{H^1(D_\refd)}\quad\text{and}\quad
\|\V[\hat{u}_\varepsilon]-\V[\hat{u}_0]\|_{W^{1,1}(D_\refd)},
\] 
respectively. 
We compute these errors for the values \(\varepsilon=0.03125,0.0625,0.125,0.25,0.5,1\).
For \(\varepsilon=1\), the maximum possible perturbation is approximately \(0.82\), i.e.\
\(\|a_r({\bf x},\omega)\|_{L^\infty(\mathcal{D})}\approx 0.82\) uniformly in \(\omega\in\Omega\).

\begin{figure}[htb]
\begin{center}
\pgfplotsset{width=0.49\textwidth, height=0.49\textwidth}
\begin{tikzpicture}
\begin{loglogaxis}[grid, ymin= 1e-6, ymax = 4e-2, xmin = 3e-2, xmax =1.1, 
  ytick={0.1,0.01,0.001,0.0001,0.00001,0.000001,1e-7},
    legend style={legend pos=south east,font=\small}, legend cell align={left},%
    ylabel={\(L^\infty\)-error}, xlabel ={$\varepsilon$}]
\addplot[line width=0.7pt,color=red,mark=o] table[x index=0,y index=1]{./pictures/errors.txt};\addlegendentry{$\|\E[\hat{u}_0]-\E[\hat{u}_\varepsilon]\|$};
\addplot[line width=0.7pt,color=blue,mark=triangle] table[x index=0,y index=2]{./pictures/errorsg001.txt};\addlegendentry{$\|\V[\hat{u}_0]-\V[\hat{u}_\varepsilon]\|$};
\addplot [line width=0.7pt, color=black,dashed] table[x index={0}, y expr={0.03*x^2}]{./pictures/errorsg001.txt};\addlegendentry{$\varepsilon^2$};
\addplot [line width=0.7pt, color=black,dashed] table[x index={0}, y expr={0.003*x^2}]{./pictures/errorsg001.txt};
\end{loglogaxis}
\end{tikzpicture}
\caption{\label{fig:epsConvergence}Convergence of the perturbation approach with respect to \(\varepsilon\).}
\end{center}
\end{figure}
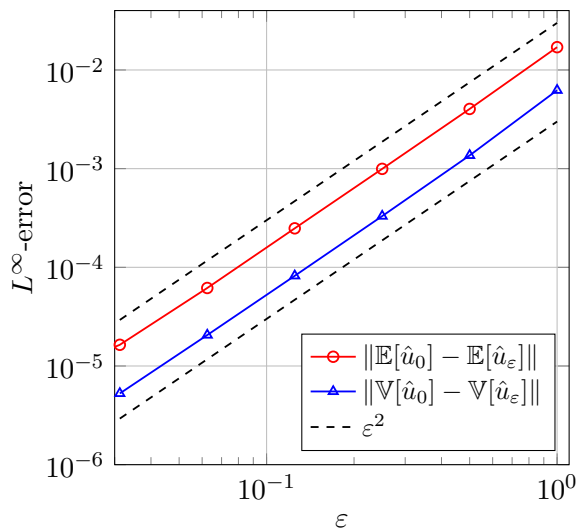

To compute the expectation and the variance of \(\hat{u}_0\) such that the theoretical rate 
of \(\varepsilon^2\) is achieved for all considered values of \(\varepsilon\),
the sparse grid quadrature on level \(q=11\), cf.~\cite{HHPS18}, resulting in \(6859\) quadrature 
points is sufficient.
Figure~\ref{fig:epsConvergence} shows that the theoretical approximation rate of \(\varepsilon^2\) 
in terms of the diffusion coefficient's perturbation's magnitude is perfectly attained in this example.

\section{Conclusion}
For the domain mapping method, 
the presented analysis indicates that smooth data are required in order to derive regularity results for the
solution. Thus quadrature methods relying on the smoothness of the integrand 
may not be feasible to compute quantities of interest if the underlying data are non-smooth.
In the case of small rough perturbations of the diffusion coefficient, a viable alternative  is 
the combination of the domain mapping method 
for the randomly deforming domain with the perturbation approach for the diffusion coefficient. 
To that end, we have derived approximation results based on a first order Taylor expansion of
the diffusion problem's solution with respect to the diffusion coefficient. 
The approximation results guarantee a quadratic approximation of the solution's 
expectation and variance in terms of the perturbations amplitude.
The presented numerical example corroborates this result.
\bibliographystyle{plain}
\bibliography{bibl}
\end{document}